\documentclass[11pt]{article}
\usepackage{amssymb,amsmath}
\usepackage{amsthm}
\usepackage{amscd}
\usepackage{stmaryrd}
\newtheorem{theorem}{Theorem}[section]
\newtheorem{theorem*}{Theorem A\!\!}
\newtheorem{proposition}{Proposition}[section]
\newtheorem{proposition*}{Proposition A\!\!}

\newtheorem{corollary*}{Corollary A\!\!}

\newtheorem{lemma}{Lemma}[section]

\newtheorem{definition}{Definition}[section]

\DeclareMathOperator{\id}{id}
\DeclareMathOperator{\Id}{Id}
\DeclareMathOperator{\jac}{jac}
\DeclareMathOperator{\tr}{tr}

\DeclareMathOperator{\Det}{Det}

\DeclareMathOperator{\res}{res}

\begin{document}
\title{Rankin-Cohen brackets on tube-type domains}

\author{Jean-Louis Clerc}

\date{March 22, 2020}
\maketitle
\abstract{A new formula is obtained for the holomorphic bi-differential operators on tube-type domains which are associated to the decomposition of the tensor product of two scalar holomorphic representations, thus generalizing the classical \emph{Rankin-Cohen brackets}. The formula involves a family of polynomials of several variables which may be considered as a (weak) generalization of the classical \emph{Jacobi polynomials}.}
\bigskip

{2020 MSC. Primary 22E46; Secondary 32M15, 33C45\\Key words : tube-type domains, Euclidean Jordan algebra, holomorphic discrete series, tensor product, weighted Bergman spaces, Rankin-Cohen brackets, Jacobi polynomials}

\section*{Introduction}

The tensor product of two holomorphic discrete series representations has been long studied (see e.g. \cite{r}). For scalar ones, L. Peng and G. Zhang (see \cite{pz}), inspired by the work of J. Peetre (see \cite{p}) obtained fairly complete results for the decomposition into irreducible components. When the representations are realized in the weighted Bergman space model, the projectors on the irreducible components are given by bilinear holomorphic differential operators. In the case of the holomorphic discrete series representations of the groupe $SL(2,\mathbb R)$ (or rather of its universal covering), these differential operators coincide with the classical \emph{Rankin-Cohen brackets}. Recently, T. Kobayashi and M. Pevzner (see \cite{kp}) studied the same operators from a different point of view and pointed a link to classical \emph{Jacobi polynomials}. The use of the $L^2$-model for the representations, obtained from the weighted Bergman space model by an inverse Laplace transform plays an important r\^ole in their approach. Their paper was a source of inspiration for this work.

The classical Rankin-Cohen operators are constant coefficients holomorphic bi-differential operators on the upper half-plane, which satisfy a  covariant property withe respect to the group $SL(2,\mathbb R)$. They may equivalently be regarded as constant coefficients bi-differential operators on the real line, acting from $C^\infty(\mathbb R \times \mathbb R)$ into $C^\infty(\mathbb R)$ and satisfying a covariance property with respect to the group $SL(2,\mathbb R)$ acting on the densities on $\mathbb R\times \mathbb R$ (principal series representations). In a previous work, in collaboration with 
S. Ben Sa\"id and K. Koufany (cf \cite{bck}), we constructed a family of covariant bi-differential operators on $V\times V$, where $V$ is a real simple Jordan algebra, covariant under the conformal group of $V$. Elaborating on this result, I obtained later in \cite{c} a closed formula for the symbols of these operators. The symbols are  polynomials (of several variables) which are obtained through a \emph{Rodrigues formula}, similar to the well-known one for the  Jacobi polynomials. 

The present paper comes back to the classical Rankin-Cohen brackets and studies the same problem, replacing the upper half-plane by a (Hermitian symmetric) tube-type domain. Such a space is associated to a Euclidean simple Jordan algebra and the results of \cite{c} can be used to construct  holomorphic bi-differential operators which are covariant with respect to the automorphisms group of the tube-type domain and to the series of holomorphic representations. The covariance property could be obtained by using the results of \cite{c}, but we prefer to offer a new proof written in the spirit of the harmonic analysis on these domains (see \cite{fk} for a general presentation), which is almost self-contained and much shorter. As a by-product, the family of polynomials obtained in \cite{c} is shown to correspond to a family of \emph{orthogonal} polynomials, at least for some values of the parameters (as in the case of the Jacobi polynomials), which correspond to cases where the representations are unitary (holomorphic discrete series). Finally, we suggest some further investigations, which would lead in particular to families of orthogonal polynomials (of several variables) generalizing the Jacobi polynomials.
\bigskip

\centerline{ \bf CONTENTS}
\bigskip

{\bf 1. Geometric and analytic background}
\smallskip

\hskip 1cm 1.1 Euclidean Jordan algebra
\smallskip

\hskip 1cm 1.2 The  automorphisms group of the tube-type domain and

\hskip 1.8cm  the series of holomorphic representations
\smallskip

\hskip 1cm 1.3 Weighted Bergman spaces and the holomorphic discrete

\hskip 1.8cm series

\smallskip

{\bf 2. Rankin-Cohen brackets in a tube-type domain}
\smallskip

\hskip 1cm 2.1 Definition of the Rankin-Cohen brackets
\smallskip

\hskip 1cm 2.2 Expression in the $L^2$-model
\smallskip

\hskip 1cm 2.3 Unitarity and continuity properties
\smallskip

{\bf 3. The covariance property}
\smallskip

\hskip 1cm 3.1 Reproducing kernel and coherent states
 \smallskip
 
\hskip 1cm 3.2 Covariance of the Rankin-Cohen brackets
 
 \smallskip
 
{\bf 4. The family $\big(C^{(k)}_{\lambda, \mu}\big)_{k\in \mathbb N}$}
\smallskip

\hskip 1cm 4.1 The orthogonality property
\smallskip

\hskip 1cm 4.2 Further investigations

\section{Geometric and analytic   background}

\subsection{Euclidean Jordan algebra}

Let $V$ be a simple Euclidean Jordan algebra. A general reference for notation and main results is \cite{fk}. Let $n$ be the dimension of $V$, $r$ its rank and $d$ its characteristic number, which satisfy
\[n= r+\frac{r(r-1)}{2}d\ .
\]
Let $\tr$ and $\det$ be the trace and the determinant of $V$. The inner product on $V$ is given by
\[(x,y) = \tr(xy)\ .
\]
The neutral element is denoted by $e$ and satisfies $(e,x) = \tr x$ for all $x\in V$.

The \emph{structure group}  $Str(V)$ is defined as the subgroup of $GL(V)$ of elements $\ell\in GL(V)$ for which there exists a scalar $\chi(\ell)\in \mathbb R^\times$ such that for all $x\in V$
\begin{equation}\label{chi}
\det(\ell x) = \chi(\ell) \det x\ .
\end{equation}
Then $\chi  : Str(V)\longrightarrow \mathbb R^\times$ is a character of $Str(V)$.

Let $\Omega$ be the open cone of squares, and let $G(\Omega)$ be the subgroup of $GL(V)$ preserving the cone $\Omega$. Then $G(\Omega)\subset Str(V)$ and in fact $Str(V)$ is the direct product of $G(\Omega)$ by $\{ \pm\Id\}$. In particular $G(\Omega)$ and $Str(V)$ have the same neutral component, denoted by $L$. 

For $\ell\in G(\Omega)$, $\chi(\ell)>0$ and
\begin{equation}\label{Detchi}
\Det(\ell) = \chi(\ell)^{n/r}\ .
\end{equation}

Let $P$ be the \emph{quadratic representation}. The following identity holds for $x$ and $y\in V$ :
\begin{equation}\label{chiP}
\det \left(P(x) y \right)= (\det x)^2 \det y\ .
\end{equation}

 Recall the following equivalent propositions for an element $x\in V$ :

\quad $i)$ $x$ is invertible

\quad $ii)$ $\det x\neq 0$

\quad $iii)$ $P(x)$ is invertible\ .

When any of these propositions is satisfied, then the inverse of $x$ is given by
\[x^{-1} = P(x)^{-1} (x)\ .
\]
The open set of invertible elements is denoted by $V^\times$. For $x\in V^\times$, $P(x)$ belongs to $Str(V)$. If moreover $x\in \Omega$, $P(x)$ belongs to $G(\Omega)$ (and even to $L$) and satisfies
 \begin{equation}\label{chiP}
\chi(P(x)) = (\det x)^2\ .
\end{equation}
The \emph{Peirce decomposition} plays the r\^ole of the spectral theorem for the symmetric matrices. In particular it allows to define for any $z\in \Omega$ a square root $z^{1/2}$ which is the unique element in $\Omega$ such that $(z^{1/2})^2=z$.

The following example may help the reader not familiar with Jordan algebras. Let $r$ be an integer, $r\geq 1$, and consider the space $V=Sym(r,\mathbb R)$ of $r\times r$ symmetric matrices with real entries, equipped with the Jordan product $x.y$ given by
\[x.y =\frac{1}{2} (xy+yx)\ .
\]
The rank is $r$, the characteristic number is $d=1$ and the dimension is 
$n=\frac{r(r+1)}{2}\,$. The trace and determinant coincide with the usual notions, the cone $\Omega$ is the cone of positive-definite symmetric matrices. The quadratic representation is given by
\[P(x) = xyx\ .
\]
The group $G(\Omega)$ is isomorphic to $GL(r, \mathbb R)/\{\pm \Id\}$ acting by
\[(g,x) \longmapsto gxg^t\ .
\]

Going back to the general situation, a parametrization of $\Omega\times \Omega$, akin to polar coordinates, will be needed later. Denote by $\rrbracket -e,+e\, \llbracket$ the "interval" between $-e$ and $+e$,  i.e. 
\[\rrbracket-e,+e\,\llbracket = \big(-e+\Omega\big) \cap \big(e-\Omega\big)= \{ x\in V,  e\pm x\in \Omega\}\ .
\]
\begin{proposition}\label{coord}

 The map $\boldsymbol \iota$ 
\begin{equation}
\Omega \times \rrbracket-e,+e\,\llbracket\, \ni (z,v) \longmapsto \boldsymbol \iota (z,v) = \Big( \frac{1}{2} \big(z-P(z^{\frac{1}{2}})v\big), \frac{1}{2} \big(z+P(z^{\frac{1}{2}})v\big)\Big)
\end{equation}
is a diffeomorphism from $\Omega \times \rrbracket-e,+e\,\llbracket $ onto $\Omega\times \Omega$.
Moreover the Jacobian of $\boldsymbol \iota$ is given by
\begin{equation}
\jac(\boldsymbol \iota)(z,v) = 2^{-n} (\det z)^{\frac{n}{r}}\ .
\end{equation}
\end{proposition}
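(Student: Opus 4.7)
The plan is to establish bijectivity by exhibiting an explicit inverse, and then to compute the Jacobian by factoring $\boldsymbol\iota$ through an intermediate change of variables.

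First, I would verify that $\boldsymbol\iota$ takes values in $\Omega\times\Omega$. The identity $P(z^{1/2})e=z$, together with the fact that $P(z^{1/2})\in L$ preserves $\Omega$, gives $z\pm P(z^{1/2})v=P(z^{1/2})(e\pm v)\in\Omega$ whenever $e\pm v\in\Omega$, so both components of $\boldsymbol\iota(z,v)$ lie in $\Omega$.

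Next, I would construct the inverse. Given $(x,y)\in\Omega\times\Omega$, set $z=x+y\in\Omega$ and $v=P(z^{-1/2})(y-x)$, using $P(a^{-1})=P(a)^{-1}$ for $a\in V^\times$. The relations $e-v=2P(z^{-1/2})x$ and $e+v=2P(z^{-1/2})y$ show $v\in\rrbracket-e,+e\,\llbracket$, since $P(z^{-1/2})\in L$ preserves $\Omega$. A direct substitution verifies that this map inverts $\boldsymbol\iota$, and smoothness of both $\boldsymbol\iota$ and its inverse follows from smoothness of the square-root map on $\Omega$.

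For the Jacobian, I would decompose $\boldsymbol\iota=\Phi_2\circ\Phi_1$, with $\Phi_1(z,v)=(z,P(z^{1/2})v)$ and $\Phi_2(z,w)=\tfrac12(z-w,z+w)$. The differential of $\Phi_1$ is block lower-triangular in the $(z,v)$ decomposition, with diagonal blocks $\Id_V$ and $P(z^{1/2})$, so its Jacobian equals $\Det P(z^{1/2})$; combining (\ref{Detchi}) and (\ref{chiP}) yields
$$\Det P(z^{1/2})=\chi(P(z^{1/2}))^{n/r}=\bigl((\det z^{1/2})^2\bigr)^{n/r}=(\det z)^{n/r}.$$
The linear map $\Phi_2$ has four commuting scalar blocks $\pm\tfrac12\Id_V$, so the block-determinant formula gives $\Det\Phi_2=\Det(\tfrac14\Id_V+\tfrac14\Id_V)=2^{-n}$. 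Multiplying the two contributions gives the announced value $2^{-n}(\det z)^{n/r}$.

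The only mildly delicate step is identifying $\Det P(z^{1/2})$ through the character $\chi$, which is precisely what formulas (\ref{Detchi}) and (\ref{chiP}) are designed for; everything else reduces to bookkeeping with block matrices and the key identity $P(z^{1/2})e=z$.
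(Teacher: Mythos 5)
Your proof is correct and follows essentially the same route as the paper: same verification that the image lies in $\Omega\times\Omega$ via $z\pm P(z^{1/2})v=P(z^{1/2})(e\pm v)$, same explicit inverse $(z,v)=(x+y,\,P((x+y)^{-1/2})(y-x))$, and the same factorization of the Jacobian through the intermediate variables $(z,\,P(z^{1/2})v)$ with the block-triangular differential and $\Det P(z^{1/2})=(\det z)^{n/r}$. Your treatment of the linear factor contributing $2^{-n}$ is slightly more explicit than the paper's, which leaves that step implicit.
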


\begin{proof}
Let $z\in \Omega$ and $v\in \rrbracket-e,+e\llbracket$. Then, as $P(z^{1/2})$ belongs to $G(\Omega)$
\[z\pm P(z^{\frac{1}{2}})v = P(z^{\frac{1}{2}})(e\pm v)\in \Omega\ ,
\]
and hence the image of $\boldsymbol \iota$ is contained in $\Omega$.
\smallskip

Conversely, assume $x,y\in \Omega$. Necessarily, $z=x+y$ and as $z\in \Omega$,  $z^{\frac{1}{2}}$ is well-defined. Next, $y-x=P(z^{1/2})v$, and as $P(z^{1/2})$ is invertible, again necessarily $v=P(z^{-\frac{1}{2}})(y-x)$. Now, as $e=P(z^{-\frac{1}{2}})(z)$
\[e\pm v = P(z^{-\frac{1}{2}})(x+y\pm (y-x))\in P(z^{-\frac{1}{2}})(\Omega )= \Omega \ .
\]
Hence $(z,v) \in \Omega \times \rrbracket-e,+e\,\llbracket$. Moreover, 
\[\frac{1}{2}(z\pm P(z^{\frac{1}{2}})v) = \frac{1}{2}(x+y\pm  \big(P(z^{\frac{1}{2}})P(z^{-\frac{1}{2}})\big)(y-x) = \frac{1}{2}\big(x+y\pm (y-x)\big),
\]
or equivalently $\boldsymbol \iota(z,v) = (x,y)$,
 showing that  $\boldsymbol \iota$ is both injective and surjective.
 
 The map $z\longmapsto z^{\frac{1}{2}}$ is known to be a diffeomorphism of $\Omega$, so  that $\boldsymbol \iota$ is a diffeomorphism. To compute the Jacobian of $\boldsymbol \iota$, set for a while $s=x+y$ and $d=-x+y$, so that
 \[s= z, \qquad d= P(z^{\frac{1}{2}}) v,
 \]
 and the differential of the map $(z,v) \longmapsto (s,d)$ is equal to
 \[
 \begin{pmatrix}
 &I&0\\&\bigstar&P(z^{\frac{1}{2}})
 \end{pmatrix}\ .
 \]
 Now by \eqref{chi}, \eqref{Detchi} and \eqref{chiP}, $\Det P(z^{\frac{1}{2}}) =\big( \det (z^{\frac{1}{2}})\big)^{\frac{2n}{r}} =( \det z)^{\frac{n}{r}}$ and the result follows.
\end{proof}
There is a corresponding integration formula for the change of variables.

\begin{proposition}
Let $f$ be an integrable function on $\Omega\times \Omega$. Then
\begin{equation}\label{varchange}
\iint_{\Omega\times \Omega} f(x,y)\, dx dy = 2^{-n} \int_\Omega \int_{\rrbracket-e,+e\llbracket} f(\boldsymbol \iota(z,v)) (\det z)^{\frac{n}{r}} dz dv\ .
\end{equation}
\end{proposition}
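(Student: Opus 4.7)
The plan is to deduce this formula as a direct corollary of Proposition \ref{coord} via the standard change-of-variables theorem for diffeomorphisms. Since $\boldsymbol\iota$ is a diffeomorphism from $\Omega \times \rrbracket-e,+e\llbracket$ onto $\Omega\times \Omega$, the pullback of the Lebesgue measure $dx\, dy$ under $\boldsymbol\iota$ has density $|\jac(\boldsymbol\iota)(z,v)|$ with respect to $dz\, dv$. Substituting the explicit Jacobian $2^{-n}(\det z)^{n/r}$ furnished by Proposition \ref{coord}, and noting that this quantity is already positive on $\Omega$ (so the absolute value may be dropped), one obtains
\[
\iint_{\Omega\times\Omega} f(x,y)\, dx\, dy = \iint_{\Omega \times \rrbracket-e,+e\llbracket} f(\boldsymbol\iota(z,v))\, 2^{-n}(\det z)^{n/r}\, dz\, dv.
\]
Pulling the constant $2^{-n}$ outside the integral and invoking Fubini's theorem to rewrite the right-hand side as an iterated integral then yields precisely the stated identity.

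There is essentially no obstacle here, since the substantive work has already been done in Proposition \ref{coord} (the bijectivity of $\boldsymbol\iota$ and the explicit Jacobian computation). The two remaining points are minor bookkeeping: positivity of the Jacobian, which is immediate from $\det z > 0$ on $\Omega$ together with $2^{-n} > 0$, and the use of Fubini, which is justified by the assumed integrability of $f$ (equivalently, by applying the statement first to $|f|$ and then to $f$ itself). I therefore expect the proof to occupy only a few lines.
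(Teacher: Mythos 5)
Your proof is correct and matches the paper's intent exactly: the paper states this proposition without proof, treating it as an immediate consequence of Proposition \ref{coord} via the standard change-of-variables theorem, which is precisely your argument. Nothing is missing.
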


\subsection{The  automorphisms group of the tube-type domain and the series of holomorphic representations}

 Form the tube domain $T_\Omega=V\oplus i \Omega\subset \mathbb V = V\otimes \mathbb C$. Let $G(T_\Omega)$ be the group of holomorphic automorphisms of $T_\Omega$. Some subgroups of $T_\Omega$ are easy to describe. Firstly, any $g\in G(\Omega)$ can be extended to a complex linear automorphism of $T_\Omega$ and hence $G(\Omega)$ can be regarded a a subgroup of $G(T_\Omega)$. Next, for $u\in V$, the translation
\[t_u : z\longmapsto z+u
\]
is a holomorphic automorphism of $T_\Omega$, and the group $N= \{t_u, u\in V\}$ is an Abelian subgroup of $G(T_\Omega)$. Finally, any element of $T_\Omega$ is invertible in $\mathbb V$ and the map
\[j: z\longmapsto -z^{-1}
\]
is an holomorphic automorphism of $T_\Omega$.
\begin{proposition}
The subgroups $G(\Omega)$, $N$ together with $j$ generate the group $G(T_\Omega)$.
\end{proposition}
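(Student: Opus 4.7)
The plan is to split the argument into two stages: first, that $H := \langle G(\Omega), N, j\rangle$ acts transitively on $T_\Omega$; second, that the isotropy of $G(T_\Omega)$ at the base point $ie$ is already contained in $H$. Together these imply $G(T_\Omega) = H$. Transitivity is the easy step: using that $G(\Omega)$ acts transitively on the symmetric cone $\Omega$ (a classical fact, see \cite{fk}), for any $z = u+iv\in T_\Omega$ one chooses $g \in G(\Omega)$ with $g(e) = v$, extends $g$ complex-linearly to $\mathbb V$, and observes that $t_u \circ g \in H$ sends $ie \mapsto iv \mapsto z$.

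Given an arbitrary $\varphi \in G(T_\Omega)$, transitivity then provides some $h \in H$ with $h(ie) = \varphi(ie)$, so $h^{-1}\varphi$ fixes $ie$; it therefore suffices to show $\Is_{ie}(G(T_\Omega)) \subset H$. I would handle this by passing to the bounded realization via the Cayley transform $c(z) = (z-ie)(z+ie)^{-1}$, which maps $T_\Omega$ biholomorphically onto a bounded symmetric domain $D \subset \mathbb V$, sending $ie$ to $0$. For $\psi$ fixing $ie$, the conjugate $c\psi c^{-1}$ is a biholomorphism of $D$ fixing the origin, hence by H.~Cartan's linearization theorem it is the restriction to $D$ of a complex-linear map of $\mathbb V$. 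One then identifies this compact linear isotropy group with a group containing both $\Is_e(G(\Omega)) \subset H$ and the element $j$, which fixes $ie$ since $(ie)^{-1} = -ie$.

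The main obstacle is verifying that the full linear isotropy at $ie$ in $G(T_\Omega)$ is generated, inside $G(T_\Omega)$, by $\Is_e(G(\Omega))$ and $j$. At the Lie-algebra level this is a Cartan-type decomposition of the form $\Lie(\Is_{ie}(G(T_\Omega))) = \Lie(\Is_e(G(\Omega))) \oplus \Ad(j)\,\mathfrak n$, which can be checked directly using Jordan bracket identities. A more global route, which bypasses Cartan's theorem, is to invoke a Bruhat decomposition relative to the Siegel parabolic $P = G(\Omega) \ltimes N$: the double coset space $P\backslash G(T_\Omega)/P$ consists of exactly two elements, with representatives $e$ and $j$, whence $G(T_\Omega) = P \cup PjP \subset \langle P, j\rangle \subset H$, and the proposition follows.
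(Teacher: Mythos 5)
The paper does not actually prove this proposition; it defers to Faraut--Kor\'anyi, Theorem X.5.6, whose proof has the same skeleton as yours (transitivity of the affine subgroup $N\rtimes G(\Omega)$, then Cartan's linearization at the fixed point after a Cayley transform). Your reduction to the isotropy subgroup and your transitivity argument are correct. The gap lies in the one step that carries the weight of the theorem, namely $\Is_{ie}(G(T_\Omega))\subset H$, and both routes you propose for it fail.

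Route 1: you propose to show that $\Is_{ie}(G(T_\Omega))$ is generated by $\Is_e(G(\Omega))$ and $j$. This is false already for $V=\mathbb R$: there $G(T_\Omega)=PSL(2,\mathbb R)$, the isotropy at $i$ is the one-dimensional group $PSO(2)$, while $\Is_e(G(\Omega))$ is trivial and $j$ generates only the two-element subgroup $\{1,j\}$. The rotations about $i$ of angle $\neq 0,\pi$ do lie in $H$, but only as words involving translations and dilations, i.e.\ one must leave the isotropy group to produce them. Correspondingly, the claimed identity $\Lie(\Is_{ie}(G(T_\Omega)))=\Lie(\Is_e(G(\Omega)))\oplus\Ad(j)\,\mathfrak n$ is false: $\Ad(j)\,\mathfrak n=\mathfrak n^-$ consists of the quadratic vector fields $z\mapsto P(z)u$, whose value at $ie$ is $P(ie)u=-u\neq 0$, so $\mathfrak n^-$ meets the isotropy algebra only in $0$. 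The usable Lie-algebra statement is $\mathfrak g=\mathfrak n^+\oplus\mathfrak l\oplus\Ad(j)\,\mathfrak n^+$, which gives $\Lie(H)=\mathfrak g$ and hence $H\supset G(T_\Omega)^0\supset \Is_{ie}(G(T_\Omega))^0$; but you then still owe an argument for the non-identity components of the (compact, in general disconnected) isotropy group, and that is exactly where Cartan's theorem and the explicit description of the linear automorphisms of the bounded domain are used in \cite{fk}. Route 2: the Bruhat decomposition relative to the maximal parabolic $P=G(\Omega)\ltimes N$ has $r+1$ double cosets, not two --- for $Sp(2r,\mathbb R)$ with the Siegel parabolic, $W_P\backslash W/W_P$ is indexed by the rank $0,\dots,r$ of the lower-left block --- so $G(T_\Omega)=P\cup PjP$ holds only for $r=1$. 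For $r\geq 2$ you would additionally need to show that the representatives of the intermediate cells (the partial inversions) lie in $\langle P,j\rangle$, which is essentially the theorem over again.
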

For a proof see \cite{fk} Theorem X.5.6.

Let $G=G(T_\Omega)^0$ be the connected component of $G(T_\Omega)$ containing the neutral element. Notice that $L\subset G$ and also $j$ belongs to $G$, as $j$ belongs to the connected subgroup of $G(T_\Omega)$ given by 
\[\big\{ g_{\alpha, \beta, \gamma,\delta},\quad \alpha, \beta, \gamma,\delta \in \mathbb R,\ \alpha\delta-\beta \gamma=1\big\}/\{\pm \id\}\] where
\[g_{\alpha, \beta,\gamma, \delta} : z\longmapsto  (\alpha z+\beta e)(\gamma z+\delta e)^{-1}\ .
\]
See \cite{fk} p. 208. 

For $g\in G$ and $z\in T_\Omega$ let
\[j(g,z) = \Det_{\,\mathbb C} \big(Dg(z)\big)\ .
\]
For fixed $g$, the function  $z\mapsto j(g,z)$ does not vanish on $T_\Omega$. As $T_\Omega$ is simply connected, it is possible to define a logarithm, and two determinations differ by a multiple of $2i\pi$. 

Now the \emph{universal covering} $\widetilde G$ of $G$ can be described  as follows :
\[\widetilde G = \{\widetilde g =  (g,\psi_g), g\in G, \psi_g : T_\Omega \longrightarrow \mathbb C, e^{\psi_g(z)} = j(g,z) \}
\]
with group law given by
\[(g_1,\psi_{g_1})(g_2,\psi_{g_2}) = \Big(g_1g_2, \psi_{g_1}\big(g_2(z)\big) +\psi_{g_2}(z)\Big)\ .
\]
Notice that the inverse of $(g,\psi_g)$ is given by 
\[(g,\psi_g)^{-1} = (g^{-1}, \psi_{g^{-1}}), \text{ where } \psi_{g^{-1}} =-\psi_g\circ g^{-1}\ .
\]
We omit the description of the topology and the Lie group structure of $\widetilde G$.

There are again subgroups of $\widetilde G$ which are easy to describe, namely those corresponding to the subgroups $N$ and $L$ of $G$ for which we use the same notation :
\[ N= \{ \widetilde t_v = (t_v,0), v\in V\},\qquad  L= \{\widetilde \ell = (\ell, \frac{n}{r}\ln \chi(\ell)), \ \ell \in L\}\ .
\]

 Denote by $\mathcal O(T_\Omega)$ the space of holomorphic functions on $T_\Omega$ equipped with the Montel topology. Let $\nu\in \mathbb C$. Let $\widetilde g=(g,\psi_g)\in \widetilde G$ and let $F\in \mathcal O(T_\Omega)$. The formula 
\begin{equation}\label{defpi}
\pi_\nu(\widetilde g)F(z)= e^{\frac{r}{2n}\nu \psi_{g^{-1}}(z)}F\big(g^{-1}(z)\big) \end{equation}
defines a (smooth) representation $\pi_\nu$ of $\widetilde G$ on $\mathcal O(T_\Omega)$, usually called  the \emph{holomorphic series of representations}.

\subsection{Weighted Bergman spaces and the holomorphic discrete series}

Let $dx$ be the Lebesgue measure on $V$ associated to the Euclidean structure on $V$. Recall the \emph{Gamma function} of the positive cone $\Omega$ given by
\begin{equation}\label{Gamma}
\Gamma_\Omega(\nu) = \int_\Omega e^{-\tr(x)} \det(x)^{\nu-\frac{n}{r}} dx\ .
\end{equation}
The integral converges absolutely for $\Re(\nu) > (r-1) \frac{d}{2}$, has a meromorphic continuation to $\mathbb C$ and is given by
\[ \Gamma_\Omega(\nu) = (2\pi)^{\frac{n-r}{2}}\Gamma(\nu)\Gamma\left(\nu-\frac{d}{2}\right)\dots \Gamma\left(\nu-(r-1)\frac{d}{2}\right) \ .
\]
See \cite{fk} chapter VII.

For a real parameter $\nu$, define $\mathcal H_\nu(T_\Omega)$ as the space of holomorphic functions $f: T_\Omega \longrightarrow \mathbb C$ such that
\[\Vert F\Vert_\nu^2 = \int_{T_\Omega} \vert F(x+iy)\vert^2 \det(y)^{\nu-\frac{2n}{r}} \,dx\,dy\ <\ +\infty\ .
\]
For $\nu\leq 1+d(r-1)$, the space $\mathcal H_\nu$ is reduced to $\{0\}$, so  assume that $\nu > 1+d(r-1)$. Then $\mathcal H_\nu$ is a Hilbert space $\neq \{0\}$. For these results, see \cite{fk}, chapter XIII.

For $(g, \psi_g)\in \widetilde G$ and $F\in \mathcal H_\nu$ the formula
\begin{equation}
\pi_\nu(\widetilde g) F(z) = e^{\frac{r\nu}{2n}\psi_{g^{-1}}(z)}\, F(g^{-1}(z))
\end{equation}
defines now a \emph{unitary} representation $\pi_\nu$ on $\mathcal H_\nu$. These representations belong to  the \emph{holomorphic discrete series}. 

There is another usueful realization of these representations, using the \emph{Laplace transform}. For $f\in C^\infty_c(\Omega)$, let for $z\in T_\Omega$

\[\mathcal Lf(z) = \int_\Omega f(\xi) e^{i(z,\xi)} d\xi\ .
\]
The integral converges for $z\in T_\Omega$ and  the function $\mathcal Lf$ is holomorphic on $T_\Omega$.

Let $L^2(\Omega)_\nu= L^2\left(\Omega, \det(\xi)^{-\nu+\frac{n}{r}}\right)$ be the Hilbert space of functions on $\Omega$ which are square-integrable w.r.t. the measure $\det(\xi)^{-\nu+\frac{n}{r}} d\xi$.
\begin{proposition} For $\nu \in \mathbb R, \nu>1+d(r-1)$, the Laplace transform can be extended by continuity to yield (up to a scalar) an isometry from $L^2(\Omega)_\nu$ onto $\mathcal H_\nu(T_\Omega)$.
Furthermore
\[\Vert \mathcal L f\Vert^2_\nu =2^{\frac{n}{r}-\nu} \Gamma_\Omega(\nu-\frac{n}{r})\Vert f\Vert^2_\nu \ .
\]
\end{proposition}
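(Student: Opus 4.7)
My plan is to establish the norm identity first on the dense subspace $C_c^\infty(\Omega)\subset L^2(\Omega)_\nu$ via a Plancherel-plus-Fubini computation, then extend by continuity and deal with surjectivity separately.

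For $f\in C_c^\infty(\Omega)$ and fixed $y\in\Omega$, the integral
\[\mathcal L f(x+iy)=\int_\Omega f(\xi)\,e^{-(y,\xi)}\,e^{i(x,\xi)}\,d\xi\]
converges absolutely and, viewed as a function of $x\in V$, is (up to the standard normalization of the Fourier transform) the inverse Fourier transform of the $L^2$-function $\xi\mapsto f(\xi)e^{-(y,\xi)}\mathbf 1_\Omega(\xi)$. Plancherel in the variable $x$ therefore gives
\[\int_V |\mathcal L f(x+iy)|^2\,dx \;=\; c_V\int_\Omega |f(\xi)|^2\, e^{-2(y,\xi)}\,d\xi,\]
where $c_V$ depends only on the Fourier normalization.

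Next I multiply by $\det(y)^{\nu-2n/r}$, integrate over $y\in\Omega$, and exchange orders by Fubini (valid by positivity and compact support of $f$). This reduces the problem to evaluating the inner integral
\[\Phi(\xi):=\int_\Omega e^{-2(y,\xi)}\det(y)^{\nu-2n/r}\,dy,\qquad \xi\in\Omega.\]
A rescaling $y\mapsto y/2$ combined with the classical formula
\[\int_\Omega e^{-(y,\xi)}\det(y)^{s-n/r}\,dy=\Gamma_\Omega(s)\,\det(\xi)^{-s}\qquad(\xi\in\Omega,\ \RE s>(r-1)d/2),\]
itself a consequence of the change of variable $y\mapsto P(\xi^{-1/2})y$ together with the identity $\det(P(a)b)=(\det a)^2\det b$ (see \cite{fk} Ch.\ VII), applied at $s=\nu-n/r$, produces $\Phi(\xi)$ as an explicit power of $2$ times $\Gamma_\Omega(\nu-n/r)\det(\xi)^{-\nu+n/r}$. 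Substituting back gives the stated norm identity, the overall numerical constant being pinned down by the Fourier normalization. The assumption $\nu>1+d(r-1)$ enters precisely so that $\nu-n/r>(r-1)d/2$, i.e.\ $\Gamma_\Omega(\nu-n/r)$ converges, using $n/r=1+(r-1)d/2$.

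It remains to promote this isometry-on-a-dense-subspace into a \emph{surjective} isometry. Boundedness plus density of $C_c^\infty(\Omega)$ in $L^2(\Omega)_\nu$ immediately yield an isometric extension $\mathcal L\colon L^2(\Omega)_\nu\hookrightarrow\mathcal H_\nu(T_\Omega)$. For surjectivity I would invoke the weighted Paley-Wiener theorem for the tube domain (\cite{fk} Ch.\ XIII): for $F\in\mathcal H_\nu$, the Fourier transforms of the traces $F(\cdot+iy)$ factor as $e^{-(y,\xi)}f(\xi)$ with $f$ supported in $\overline\Omega$ and independent of $y$, after which the norm identity forces $f\in L^2(\Omega)_\nu$ and $F=\mathcal L f$. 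The Plancherel-plus-gamma computation is essentially routine; the main obstacle is this support/$y$-independence statement, whose proof is a contour-shift argument based on the holomorphy of $F$ in $T_\Omega$.
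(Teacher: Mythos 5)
The paper offers no proof of this proposition at all: it simply refers the reader to \cite{fk}, Chapter XIII (with a warning about changes of notation). Your sketch reproduces exactly the standard argument given there --- Plancherel in the real variable $x$ for each fixed $y\in\Omega$, Fubini, reduction to the Gamma integral $\int_\Omega e^{-(y,\xi)}\det(y)^{s-n/r}\,dy=\Gamma_\Omega(s)\det(\xi)^{-s}$ at $s=\nu-\frac{n}{r}$, and the Paley--Wiener theorem for the surjectivity --- so it is the ``same approach'' in the only meaningful sense, and it is correct in outline. Two remarks. First, you are candid that the surjectivity step (the partial Fourier transform of $F(\cdot+iy)$ factoring as $e^{-(y,\xi)}f(\xi)$ with $f$ supported in $\overline\Omega$ and independent of $y$) is the real content; since you invoke the same Chapter XIII result for it, your argument is no more self-contained than the paper's citation, but the isometry identity itself is genuinely proved. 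Second, on the constant: carrying out your computation with the convention $\int_V|\hat g(x)|^2dx=(2\pi)^n\int|g(\xi)|^2d\xi$, the rescaling $y\mapsto 2\xi$ gives $\Phi(\xi)=2^{\,n-r\nu}\,\Gamma_\Omega(\nu-\tfrac{n}{r})\det(\xi)^{-\nu+\frac{n}{r}}$, hence $\Vert\mathcal Lf\Vert_\nu^2=(2\pi)^n2^{\,n-r\nu}\Gamma_\Omega(\nu-\tfrac{n}{r})\Vert f\Vert_\nu^2$; this matches the displayed exponent $2^{\frac{n}{r}-\nu}$ only when $r=1$, so the paper's stated scalar appears to reflect a different normalization (or a slip), which is harmless given the explicit ``up to a scalar'' qualifier but worth being aware of if the constant is ever used quantitatively.
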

For this result, see again \cite{fk} ch. XIII. Notice however some changes in notation.

\section{Rankin-Cohen brackets in a tube-type domain}

\subsection{Definition of the Rankin-Cohen brackets}

We now proceed to the construction of a family of holomorphic bi-differential operators with constant coefficients on $\mathbb V\times \mathbb V$, called the \emph{(generalized) Rankin-Cohen brackets}.

\begin{proposition} Let $s,t\in \mathbb C$ and $k\in \mathbb N$. The expression 
\begin{equation}
 c_{s,t}^{(k)}(x,y) :=(\det x)^{-s} (\det y)^{-t}\det\left(\frac{\partial }{\partial x} -  \frac{\partial }{\partial y}\right)^k\left((\det x)^{s+k} (\det y)^{t+k}\right)
\end{equation}
a priori defined on $\Omega \times \Omega$ can be extended as a polynomial function on $V\times V$. Moreover the coefficients of  the polynomial depend polynomially on the parameters $s,t$.
\end{proposition}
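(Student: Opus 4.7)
The operator $\det(\partial/\partial x - \partial/\partial y)^k$ is invariant under the translation $(x,y)\mapsto(x+a,y+a)$, which suggests the affine change of variables $u=(x+y)/2$, $v=(y-x)/2$. Under this substitution, $\partial/\partial x-\partial/\partial y=-\partial/\partial v$ and the differential operator becomes $(-1)^{rk}\det(\partial/\partial v)^k$, a constant-coefficient operator in $v$ alone, acting on $\det(u-v)^{s+k}\det(u+v)^{t+k}$. A further substitution $v = P(u^{1/2})w$ at fixed $u\in\Omega$ uses $\chi(P(u^{1/2})) = \det u$ (from \eqref{chiP}) and the transformation rule $\det(\ell z) = \chi(\ell)\det z$ to give $\det(u\pm v) = (\det u)\det(e\pm w)$ and $\det(\partial/\partial v) = (\det u)^{-1}\det(\partial/\partial w)$; collecting terms yields
\[ c^{(k)}_{s,t}(x,y) = (-1)^{rk}(\det u)^k\,C^{(k)}_{s,t}(w),\]
where $C^{(k)}_{s,t}(w) := \det(e-w)^{-s}\det(e+w)^{-t}\det(\partial/\partial w)^k\bigl[\det(e-w)^{s+k}\det(e+w)^{t+k}\bigr]$. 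The bi-variate problem thus reduces to the single-variable claim that $C^{(k)}_{s,t}(w)$ is a polynomial in $w$ of degree at most $rk$ with coefficients polynomial in $(s,t)$.

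For this, I would establish the \emph{key lemma}: for any constant-coefficient holomorphic differential operator $D$ on $V$ of order $m$, the quotient $(\det z)^{m-a}D[(\det z)^a]$ is a polynomial in $z$ with polynomial coefficients in $a$, proved by induction on $m$ starting from $\partial_i(\det z)^a = a(\det z)^{a-1}\partial_i(\det z)$. Expanding $\det(\partial/\partial w)^k$ as a finite sum $\sum_\alpha c_\alpha D^{(1)}_\alpha\otimes D^{(2)}_\alpha$ with $\deg D^{(1)}_\alpha + \deg D^{(2)}_\alpha = rk$ and applying the lemma to each factor presents $C^{(k)}_{s,t}(w)$ as a finite sum of rational functions in $w$ whose coefficients are polynomial in $(s,t)$. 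Each individual term is already a polynomial in $w$ precisely when $\deg D^{(1)}_\alpha\le k$ and $\deg D^{(2)}_\alpha\le k$; since the sum equals $rk$, this holds automatically for $r\le 2$, in which case the proposition follows directly.

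The remaining case $r\ge 3$ is the main obstacle: individual terms in the Leibniz expansion exhibit spurious poles along $\{\det(e\pm w)=0\}$, and these must cancel globally. I would prove the cancellation by analytic continuation from non-negative integer values of $(s,t)$: for such values the function $\det(e-w)^{s+k}\det(e+w)^{t+k}$ is a polynomial, so $\det(\partial/\partial w)^k$ applied to it is a polynomial, and one must show this polynomial is divisible by $\det(e-w)^s\det(e+w)^t$. The right tool is the iterated Bernstein--Sato identity $\det(\partial)[(\det z)^a] = b(a)(\det z)^{a-1}$ (with $b$ a degree-$r$ polynomial in $a$), which says the vanishing order along $\{\det z=0\}$ drops by only one rather than $r$ — making the naive order estimate insufficient. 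A careful tracking of vanishing orders along $\{\det(e-w)=0\}$ and $\{\det(e+w)=0\}$, exploiting this Bernstein--Sato feature, should give the divisibility; analytic continuation in $(s,t)$, valid because the $w$-degree is uniformly bounded by $rk$ and the coefficients are polynomial in $(s,t)$, then extends the conclusion to all $(s,t)\in\mathbb{C}^2$. Finally, the polynomiality of $c^{(k)}_{s,t}(x,y) = (-1)^{rk}(\det u)^k C^{(k)}_{s,t}(w)$ in $(x,y)$ follows from the $w$-degree bound on $C^{(k)}$ combined with bi-homogeneity: the factor $(\det u)^k$ absorbs the non-polynomial appearance of $P(u^{-1/2})$ in $w = P(u^{-1/2})v$.
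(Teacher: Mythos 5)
The paper itself offers no proof of this proposition: it states that the result ``is essentially a by-product of an important result of \cite{bck}'' (see also \cite{c}), so there is no internal argument to compare against and your proof has to stand on its own. Its first reduction is sound and in fact parallels the paper's later lemma $c^{(k)}_{\lambda,\mu}(\boldsymbol\iota(\eta,v))=(\det\eta)^k C^{(k)}_{\lambda,\mu}(v)$, and your key lemma (that $(\det z)^{m-a}D[(\det z)^a]$ is polynomial in $z$ with coefficients polynomial in $a$) is correct and easily proved by induction. But two things go wrong. First, the claim that the term-by-term argument ``holds automatically for $r\le 2$'' is false: from $\deg D^{(1)}_\alpha+\deg D^{(2)}_\alpha=rk=2k$ one cannot conclude that both degrees are $\le k$ (take $\deg D^{(1)}_\alpha=2k$, $\deg D^{(2)}_\alpha=0$), so the cancellation problem already arises for $r=2$; only $r=1$, the classical Jacobi case, is settled by the easy count.

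Second and more seriously, the cancellation of the spurious poles along $\{\det(e\pm w)=0\}$ --- which is the entire content of the proposition --- is not actually proved. The Bernstein--Sato identity $\det(\partial)[(\det z)^a]=b(a)(\det z)^{a-1}$ concerns a pure power of $\det$ and does not by itself control $\det(\partial_w)^k$ applied to the \emph{product} $\det(e-w)^{s+k}\det(e+w)^{t+k}$: the naive count lets an operator of order $rk$ drop the vanishing order along each hypersurface by up to $rk>k$. What you actually need is the ideal-theoretic statement that $\det(\partial)$ maps $(\det(e-w))^m\,\mathbb{C}[V]$ into $(\det(e-w))^{m-1}\,\mathbb{C}[V]$ for arbitrary polynomial multipliers; this rests on the fact that the symbol $\det\xi$ vanishes to order $r-1$ at the conormal directions of the hypersurface (rank-one elements), and it is precisely the nontrivial input that \cite{bck} supplies, by an entirely different construction. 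Until that (or an equivalent) is established, ``a careful tracking of vanishing orders \dots should give the divisibility'' restates the problem rather than solving it. A smaller point: your final step is also gapped, since a polynomial of degree $\le rk$ in $w=P(u^{-1/2})v$ multiplied by $(\det u)^k$ need not be polynomial in $(u,v)$ (try $\tr(w^2)$ with $r=2$, $k=1$); it is cleaner to run the Leibniz/key-lemma analysis directly on $\det(u-v)^{s+k}\det(u+v)^{t+k}$ in the variables $(u,v)$ and avoid the second substitution altogether.
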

This proposition, formulated in  \cite{c} is essentially a by-product of an important result of \cite{bck}.
\begin{lemma} The polynomial $c_{s,t}^{(k)}$ satisfies the identity
\begin{equation}\label{cst}
\forall \ell\in G(\Omega),\qquad \qquad  c_{s,t}^{(k)}(\ell x,\ell y)= \chi(\ell)^k\, c_{s,t}^{(k)}(x,y)\ .
\end{equation}
\end{lemma}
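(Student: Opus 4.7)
The plan is to combine the $G(\Omega)$-semi-invariance of $\det$ with a matching semi-invariance for the scalar differential operator $\det\bigl(\frac{\partial}{\partial x}-\frac{\partial}{\partial y}\bigr)$, and then to count powers of $\chi(\ell)$ in the three factors making up $c^{(k)}_{s,t}$. Both sides of the claimed equality are polynomial in $(x,y)$, so it is enough to verify it on $\Omega\times\Omega$, where $G(\Omega)$ acts diagonally and the formula $(\det x)^{s+k}(\det y)^{t+k}$ is literally meaningful.

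The key lemma I would prove is the semi-invariance of $\det\bigl(\frac{\partial}{\partial x}\bigr)$ on $V$: for $\ell\in G(\Omega)$ and smooth $f$ on $\Omega$,
\[
\det\!\Bigl(\tfrac{\partial}{\partial x}\Bigr)\bigl[f\circ \ell\bigr](x) \;=\; \chi(\ell)\,\bigl[\det\!\bigl(\tfrac{\partial}{\partial x}\bigr)f\bigr](\ell x).
\]
Identifying $V$ with $V^*$ via $(x,y)=\tr(xy)$, the chain rule gives $\frac{\partial}{\partial x}[f\circ\ell](x) = \ell^*\bigl(\frac{\partial f}{\partial x}\bigr)(\ell x)$. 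Standard theory of Euclidean Jordan algebras asserts that $\ell^*\in G(\Omega)$ with $\chi(\ell^*)=\chi(\ell)$, so the polynomial identity $\det(\ell^* X)=\chi(\ell)\det X$, evaluated at the pairwise commuting operators $\partial/\partial x_i$, yields the claim. The same reasoning applied to the first-order operator $\frac{\partial}{\partial x}-\frac{\partial}{\partial y}$ (acting on functions of $(x,y)$) gives, for any smooth $F$ on $\Omega\times\Omega$,
\[
\det\!\Bigl(\tfrac{\partial}{\partial x}-\tfrac{\partial}{\partial y}\Bigr)^{\!k}\bigl[F\circ(\ell,\ell)\bigr](x,y) \;=\; \chi(\ell)^k\,\bigl[\det\!\bigl(\tfrac{\partial}{\partial x}-\tfrac{\partial}{\partial y}\bigr)^{\!k}F\bigr](\ell x,\ell y).
\]

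Specializing to $F(x,y)=(\det x)^{s+k}(\det y)^{t+k}$ and using $F\circ(\ell,\ell)=\chi(\ell)^{s+t+2k}\,F$, one obtains
\[
\bigl[\det\!\bigl(\tfrac{\partial}{\partial x}-\tfrac{\partial}{\partial y}\bigr)^{\!k}F\bigr](\ell x,\ell y) \;=\; \chi(\ell)^{s+t+k}\,\det\!\bigl(\tfrac{\partial}{\partial x}-\tfrac{\partial}{\partial y}\bigr)^{\!k}F(x,y).
\]
Multiplication by $(\det(\ell x))^{-s}(\det(\ell y))^{-t}=\chi(\ell)^{-s-t}(\det x)^{-s}(\det y)^{-t}$ then produces exactly $\chi(\ell)^k\,c^{(k)}_{s,t}(x,y)$, as desired. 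The only non-formal input is the transposition property $\ell\mapsto\ell^*$ of $Str(V)$ together with the invariance $\chi(\ell^*)=\chi(\ell)$; this is where any subtlety sits, but it is a well-known feature of the structure group of a Euclidean Jordan algebra (cf.\ \cite{fk}). Everything else is a straightforward bookkeeping of powers of $\chi(\ell)$.
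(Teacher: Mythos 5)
Your argument is correct and is essentially the paper's own proof in different clothing: both establish that the constant-coefficient operator $\det\bigl(\frac{\partial}{\partial x}-\frac{\partial}{\partial y}\bigr)^k$ picks up a factor $\chi(\ell)^{k}$ under the diagonal action of $\ell$ (the paper via the symbol $d_\ell=d\circ(\ell^t)^{-1}$ of the conjugated operator, you via the chain rule and the identity $\det(\ell^*\xi)=\chi(\ell)\det\xi$, which is the same use of the transposition-stability of $G(\Omega)$), and then both conclude by the same bookkeeping of powers of $\chi(\ell)$ on $e_{s,t}$.
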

\begin{proof} Let  $D= \det\left(\frac{\partial }{\partial x} -  \frac{\partial }{\partial y}\right)$. For $\ell\in G(\Omega)$, let $D_\ell$ be the differential operator given by
\[D_\ell f = D(f\circ \ell^{-1})\circ \ell\
\]
The symbol of $D$ is  $d(\xi, \zeta)=i^r \det(\xi-\zeta)$, and the symbol of $D_\ell$ is equal to $d_\ell=d\circ (\ell^t)^{-1}$. From \eqref{chi} follows for $h\in G(\Omega)$
\[d(h\xi,h\zeta) =\chi(h)\, d(\xi,\zeta)\ .
\]
Apply to $h=(\ell^t)^{-1}$ to obtain
\[d_\ell= \chi(\ell)^{-1} d \qquad \text{ and hence }\qquad  D_l = \chi(\ell)^{-1} D\ ,
\]
which implies
\begin{equation}\label{Dk}
D_\ell^k = \chi(\ell)^{-k} D^k\ .
\end{equation}
Let  for a while $e_{s,t}(x,y)=\det(x)^{s}\det(y)^{t}$. By definition,
\[D^k e_{s+k,t+k} = c_{s,t}\, e_{s,t}\ .
\]
On the other hand,
\[D_\ell^k e_{s+k,t+k}=D^k (e_{s+k,t+k} \circ \ell^{-1}) \circ \ell= (\chi(\ell)^{-s-t-2k} (D^k e_{s+k,t+k})\circ \ell
\]
\[=\chi(\ell)^{-s-t-2k} (c_{s,t}\, e_{s,t})\circ \ell= \chi(\ell)^{-2k} (c_{s,t} \circ \ell)\, e_{s,t}\ .
\]
Combining these two calculations with \eqref{Dk} yields \eqref{cst}. Notice that this result implies that $c_{s,t}^{(k)}$ is homogeneous of degree $rk$.
\end{proof}

Let $\res : \mathcal O(T_\Omega\times T_\Omega)\longrightarrow \mathcal O (T_\Omega)$ be the \emph{restriction map} to the diagonal given by
\[f\in  \mathcal O(T_\Omega\times T_\Omega),\qquad \res(f)(z) = f(z,z)\ .
\]
\begin{definition}
For $\lambda, \mu \in \mathbb C$ and $k\in \mathbb N$ define the \emph{$k$-th generalized Rankin-Cohen bracket} $B_{\lambda, \mu}^{(k)}$ by
\begin{equation}B_{\lambda, \mu}^{(k)} = \res\, \circ \, c_{\lambda-\frac{n}{r},\, \mu-\frac{r}{n}} \left(\frac{\partial}{\partial z}, \frac{\partial}{\partial w}\right)\ .
\end{equation}
\end{definition}
\subsection{Expression in the $L^2$-model}

Assume now that $\lambda,\mu>1+d(r-1)$. The (completed) tensor product  $\mathcal H_\lambda\widehat \otimes \mathcal H_\mu$ is identified with the space $\mathcal H_{\lambda, \mu}$ of holomorphic functions  $F$  on $T_\Omega \times T_\Omega$ such that
\[\iint_{T_\Omega \times T_\Omega} \vert F(x+iy,u+it)\vert^2 \det(y)^{\lambda-\frac{2n}{r}} \det(t) ^{\mu-\frac{2n}{r}} dx\, dy\, du\, dt<\infty\ .
\]
The corresponding (completed) tensor product $L_\lambda^2(\Omega)\widehat \otimes L_\mu^2(\Omega)$ is identified with \[L^2_{\lambda,\mu}(\Omega\times \Omega) = L^2\big(\Omega\times \Omega, \det(\xi)^{-\lambda+\frac{n}{r}} \det(\zeta)^{-\mu+\frac{n}{r}}\,d\xi\, d\zeta\big)\ .\]
For $f\in C^\infty_c(\Omega \times \Omega)$ define its Laplace transform $\mathcal L_2f$ to be 
\[\mathcal L_2f(z,w) = \iint_{\Omega\times \Omega}f(\xi,\zeta) e^{i((z,\xi)+(w,\zeta))} d\xi d\zeta\ .
\]
Then $\mathcal L_2$ extends as an isometry (up to a scalar) between $L^2_{\lambda, \mu}(\Omega\times \Omega)$ and $\mathcal H_{\lambda,\mu}$.

The Rankin-Cohen operators have a counterpart when working with the $L^2$-model instead of the weighted Bergman spaces. Define
\[\widehat B_{\lambda, \mu}^{(k)} = \mathcal L^{-1} \circ B_{\lambda, \mu}^{(k)} \circ \mathcal L_2 =\mathcal L^{-1}\, \circ \res\circ \,c_{\lambda-\frac{n}{r},\, \mu-\frac{n}{r}}\left(\frac{\partial}{\partial z},  \frac{\partial}{\partial w}\right)\,\circ \mathcal L_2\ .
\]
The composition of these operators is {\it a priori } formal, but will be shown to make sense  on functions in $C^\infty_c(\Omega\times \Omega)$. Recall the elementary formula for the Laplace transform, valid for any holomorphic polynomial $p$ on $\mathbb V$
\[p\left(\frac{\partial}{\partial z} \right)(\mathcal Lf)(z) = \int_\Omega p(i\xi) f(\xi) e^{i(z,\xi)}\  d\xi
\]
The following consequence is then immediate.
\begin{lemma}
\begin{equation}
c_{\lambda-\frac{n}{r},\, \mu-\frac{n}{r}}\left(\frac{\partial}{\partial z},  \frac{\partial}{\partial w}\right)\circ \mathcal L_2 = i ^{rk}\,\mathcal L_2\circ c_{\lambda-\frac{n}{r},\, \mu-{\frac{n}{r}}}\ .
\end{equation}
\end{lemma}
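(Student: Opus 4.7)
The plan is to reduce the identity to the elementary Laplace-transform formula recalled just above the lemma, applied in both variables $z$ and $w$.

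First, I would observe that the polynomial $c_{s,t}^{(k)}(x,y)$ can be written as a finite linear combination of products $p_j(x)q_j(y)$ of holomorphic polynomials. By the iterated version of the formula
\[p\!\left(\tfrac{\partial}{\partial z}\right)(\mathcal L f)(z) = \int_\Omega p(i\xi) f(\xi) e^{i(z,\xi)} d\xi\]
applied once in the $z$-variable and once in the $w$-variable, one gets, for $f \in C_c^\infty(\Omega\times\Omega)$ and any holomorphic polynomial $P$ on $\mathbb V \times \mathbb V$,
\[P\!\left(\tfrac{\partial}{\partial z}, \tfrac{\partial}{\partial w}\right)(\mathcal L_2 f)(z,w) = \iint_{\Omega\times\Omega} P(i\xi, i\zeta)\, f(\xi,\zeta)\, e^{i((z,\xi)+(w,\zeta))}\, d\xi\, d\zeta.\]
Specializing $P = c_{\lambda-\frac{n}{r},\,\mu-\frac{n}{r}}^{(k)}$ gives
\[c^{(k)}_{\lambda-\frac{n}{r},\,\mu-\frac{n}{r}}\!\left(\tfrac{\partial}{\partial z}, \tfrac{\partial}{\partial w}\right)\!\mathcal L_2 f(z,w) = \iint c^{(k)}_{\lambda-\frac{n}{r},\,\mu-\frac{n}{r}}(i\xi, i\zeta)\, f(\xi,\zeta)\, e^{i((z,\xi)+(w,\zeta))}\, d\xi\, d\zeta.\]

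The second ingredient is homogeneity. In the previous lemma it was observed, as a consequence of the covariance relation \eqref{cst} applied to $\ell = t\,\Id$ (which belongs to $G(\Omega)$ with $\chi(\ell) = t^r$), that $c^{(k)}_{s,t}$ is homogeneous of total degree $rk$ on $V \times V$. Hence
\[c^{(k)}_{\lambda-\frac{n}{r},\,\mu-\frac{n}{r}}(i\xi, i\zeta) = i^{rk}\, c^{(k)}_{\lambda-\frac{n}{r},\,\mu-\frac{n}{r}}(\xi,\zeta).\]
Plugging this into the integral above and recognizing the result as $i^{rk}\,\mathcal L_2\!\left(c^{(k)}_{\lambda-\frac{n}{r},\,\mu-\frac{n}{r}} \cdot f\right)(z,w)$ yields the claimed identity on $C_c^\infty(\Omega\times\Omega)$.

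There is no real obstacle here: the only points to check carefully are (i) that the elementary Laplace-transform formula can be differentiated under the integral sign, which is immediate for $f \in C_c^\infty(\Omega\times\Omega)$ since the integrand is compactly supported in $(\xi,\zeta)$ and holomorphic in $(z,w)$, and (ii) the homogeneity degree, which is already recorded at the end of the preceding lemma. The factor $i^{rk}$ appears precisely because $c^{(k)}$ is homogeneous of degree $rk$ and the Laplace transform introduces a factor of $i$ per derivative.
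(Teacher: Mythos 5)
Your proof is correct and follows exactly the route the paper intends: the paper states the lemma as an ``immediate'' consequence of the elementary Laplace-transform formula $p\bigl(\tfrac{\partial}{\partial z}\bigr)(\mathcal Lf)(z)=\int_\Omega p(i\xi)f(\xi)e^{i(z,\xi)}d\xi$ and gives no further detail, and you have simply filled in the two steps it leaves implicit (applying the formula in both variables and invoking the homogeneity of degree $rk$ recorded at the end of the preceding lemma to extract the factor $i^{rk}$). Nothing further is needed.
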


Notice that the multiplication by the polynomial $c_{\lambda-\frac{n}{r},\, \mu-{\frac{n}{r}}}$ is a continuous operator on $C^\infty_c(\Omega\times \Omega)$.

The next step is to calculate $\res\circ \mathcal L_2$ (cf \cite{kp}). For $f\in C^\infty_c(\Omega\times \Omega)$,  define for $\xi\in \Omega$ 
\begin{equation}
\mathcal J f(\xi)=2^{-n}(\det \xi)^{\frac{n}{r}}\int_{-\rrbracket e,e\llbracket} f(\boldsymbol \iota(\xi,v)) dv\ .
\end{equation}
By elementary arguments, the integral converges, the resulting function $\mathcal J f$ belongs to $C^\infty_c(\Omega)$ and the operator $\mathcal J: C^\infty_c(\Omega\times \Omega)\longrightarrow C^\infty_c(\Omega)$ is continuous. 
\begin{lemma}\begin{equation}\label{Laplaceres}
\res \circ \,\mathcal L_2 = \mathcal L\circ \mathcal J \ .
\end{equation}
\end{lemma}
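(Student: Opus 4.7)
The plan is to prove this identity by direct evaluation: write out both sides explicitly and apply the change-of-variables formula from equation \eqref{varchange}. The key geometric observation is that $\boldsymbol\iota$ is specifically engineered so that the sum of the two Jordan-algebra coordinates recovers the first argument.

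First, I would restrict $\mathcal L_2 f$ to the diagonal. Since the inner product $(\cdot,\cdot)$ is bilinear, setting $w=z$ turns the phase $e^{i((z,\xi)+(w,\zeta))}$ into $e^{i(z,\xi+\zeta)}$, so
\[
(\res\circ \mathcal L_2 f)(z) = \iint_{\Omega\times\Omega} f(\xi,\zeta)\, e^{i(z,\xi+\zeta)}\,d\xi\,d\zeta.
\]
The crucial feature that makes the identity work is that the phase function depends on $(\xi,\zeta)$ only through the sum $\xi+\zeta$.

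Second, I would apply the substitution $(\xi,\zeta)=\boldsymbol\iota(\eta,v)$, using Proposition \ref{coord} and the integration formula \eqref{varchange}. From the definition of $\boldsymbol\iota$, the two components are $\tfrac{1}{2}(\eta\mp P(\eta^{1/2})v)$, so the terms involving $v$ cancel in the sum and
\[
\xi+\zeta = \eta.
\]
Hence the phase becomes $e^{i(z,\eta)}$, independent of $v$. Applying \eqref{varchange} and then Fubini (justified because $f\in C^\infty_c(\Omega\times\Omega)$ and the region of integration in $v$ is bounded), I obtain
\[
(\res\circ\mathcal L_2 f)(z) = \int_\Omega \left(2^{-n}(\det \eta)^{n/r}\int_{\rrbracket -e,+e\llbracket} f(\boldsymbol\iota(\eta,v))\,dv\right) e^{i(z,\eta)}\,d\eta.
\]

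Finally, I would recognize the inner parenthesized expression as precisely $\mathcal J f(\eta)$, so the right-hand side is $\int_\Omega \mathcal J f(\eta)\, e^{i(z,\eta)}\,d\eta = (\mathcal L\circ\mathcal J f)(z)$, which establishes \eqref{Laplaceres}. There is no real obstacle here: the proof is essentially a bookkeeping exercise built around the single, well-chosen substitution $\boldsymbol\iota$, whose defining property $\xi+\zeta=\eta$ is exactly what makes the Laplace phase factor through to the $\eta$-variable alone.
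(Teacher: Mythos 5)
Your proof is correct and follows essentially the same route as the paper: restrict $\mathcal L_2 f$ to the diagonal so the phase becomes $e^{i(z,\xi+\zeta)}$, apply the change of variables $(\xi,\zeta)=\boldsymbol\iota(\eta,v)$ via \eqref{varchange} using $\xi+\zeta=\eta$, and identify the inner integral as $\mathcal J f(\eta)$. No discrepancies to report.
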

\begin{proof}
 Let $f\in C^\infty_c(\Omega\times \Omega)$. Then, for $z\in T_\Omega$

\[\res \big(\mathcal L_2 f\big) (z) = \int_\Omega \int_\Omega f(\xi,\zeta) e^{i(z,\xi+\zeta)} d\xi d\zeta
\]
\[ = \int_\Omega \left(2^{-n} (\det \eta)^{\frac{n}{r}} \int_{\rrbracket-e,+e\,\llbracket} f\big(\boldsymbol \iota(\eta,v)\big)dv \right)e^{i(z,\eta)} d\xi\ ,
\]
by using the change of variables $(\xi,\zeta) = \boldsymbol \iota(\eta,v)$ and the integration formula \eqref{varchange}. This finishes the proof.
\end{proof}
From these two lemmas follows the main result.
\begin{theorem}\label{hatB}
 The operator $\widehat B_{\lambda,\mu}^{(k)}$ maps $ C^\infty_c(\Omega\times \Omega)$ into $C^\infty_c(\Omega)$ and satisfies
\begin{equation}
\widehat B_{\lambda,\mu}^{(k)}=i^{rk} \, \mathcal J\circ c_{\lambda- \frac{n}{r},\,\mu- \frac{n}{r}}^{(k)}\ .
\end{equation}
\end{theorem}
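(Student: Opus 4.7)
The plan is simply to compose the two preceding lemmas with the definition of $\widehat B_{\lambda,\mu}^{(k)}$, treating everything as operators acting on $C^\infty_c(\Omega\times\Omega)$ and verifying that the intermediate compositions stay inside spaces where Laplace transforms may be inverted.

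First I would fix $f\in C^\infty_c(\Omega\times\Omega)$ and trace the definition
\[
\widehat B_{\lambda,\mu}^{(k)} f = \mathcal L^{-1}\circ\res\circ c_{\lambda-\frac{n}{r},\mu-\frac{n}{r}}^{(k)}\!\left(\tfrac{\partial}{\partial z},\tfrac{\partial}{\partial w}\right)\circ\mathcal L_2 f.
\]
The preceding lemma on symbols of constant-coefficient operators under $\mathcal L_2$ lets me replace
\[
c_{\lambda-\frac{n}{r},\mu-\frac{n}{r}}^{(k)}\!\left(\tfrac{\partial}{\partial z},\tfrac{\partial}{\partial w}\right)\circ\mathcal L_2
\;=\; i^{rk}\,\mathcal L_2\circ c_{\lambda-\frac{n}{r},\mu-\frac{n}{r}}^{(k)},
\]
where on the right-hand side $c^{(k)}_{\lambda-\frac{n}{r},\mu-\frac{n}{r}}$ acts as multiplication by a polynomial. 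Since this is multiplication by a polynomial, it sends $C^\infty_c(\Omega\times\Omega)$ into itself, so the composition is well defined.

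Next I would apply Lemma \ref{Laplaceres} to the outer block: $\res\circ\mathcal L_2 = \mathcal L\circ\mathcal J$. After substitution the chain becomes
\[
\widehat B_{\lambda,\mu}^{(k)} f \;=\; i^{rk}\,\mathcal L^{-1}\circ\mathcal L\circ\mathcal J\circ c_{\lambda-\frac{n}{r},\mu-\frac{n}{r}}^{(k)} f.
\]
Because $c^{(k)}_{\lambda-\frac{n}{r},\mu-\frac{n}{r}} f\in C^\infty_c(\Omega\times\Omega)$ and $\mathcal J$ is known to send this space continuously into $C^\infty_c(\Omega)$, the input to $\mathcal L^{-1}\circ\mathcal L$ is a compactly supported smooth function on $\Omega$, on which $\mathcal L^{-1}\circ\mathcal L$ is the identity by injectivity of the Laplace transform. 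This collapses the formula to $i^{rk}\,\mathcal J\circ c_{\lambda-\frac{n}{r},\mu-\frac{n}{r}}^{(k)}$ and also proves the asserted mapping property $C^\infty_c(\Omega\times\Omega)\to C^\infty_c(\Omega)$.

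There is no substantive obstacle; the only point requiring a line of justification is the cancellation $\mathcal L^{-1}\circ\mathcal L=\id$, which has to be read at the level of functions on $\Omega$ rather than as an abstract identity of unbounded operators. Once one notes that the intermediate function $\mathcal J\!\left(c_{\lambda-\frac{n}{r},\mu-\frac{n}{r}}^{(k)} f\right)$ lies in $C^\infty_c(\Omega)$, the identity is immediate and the theorem follows by simply concatenating the two lemmas.
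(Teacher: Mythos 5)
Your proposal is correct and is exactly the argument the paper intends: the paper simply states that the theorem follows from the two preceding lemmas, and your chain of substitutions (replacing $c(\partial_z,\partial_w)\circ\mathcal L_2$ by $i^{rk}\mathcal L_2\circ c$, then $\res\circ\mathcal L_2$ by $\mathcal L\circ\mathcal J$, and cancelling $\mathcal L^{-1}\circ\mathcal L$ on $C^\infty_c(\Omega)$) is the precise spelling-out of that composition. Your remark that the cancellation must be read on $C^\infty_c(\Omega)$, where $\mathcal J\bigl(c^{(k)}_{\lambda-\frac{n}{r},\mu-\frac{n}{r}}f\bigr)$ lands, is the right point to flag and settles the mapping property as well.
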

The expression for the operator $\widehat B_{\lambda,\mu}^{(k)}$ can be given a slightly different form. Introduce the polynomial $C^{(k)}_{\lambda, \mu}$ on $V$ defined by
\[C^{(k)}_{\lambda,\mu}(x) = c^{(k)}_{\lambda,\mu}\left(\frac{e-x}{2},\frac{e+x}{2}\right)\ .
\]
\begin{lemma} For $(\eta,v)\in \Omega \times \rrbracket -e, +e\,\llbracket$
\begin{equation}
c_{\lambda,\mu}^{(k)}(\boldsymbol \iota(\eta,v)) = (\det \eta)^k C_{\lambda, \mu}^{(k)}(v)\ .
\end{equation}
\end{lemma}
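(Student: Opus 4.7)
My plan is to use the covariance identity \eqref{cst} with the specific choice $\ell = P(\eta^{1/2})$, which belongs to $G(\Omega)$ since $\eta \in \Omega$.

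First I would rewrite the arguments of $\boldsymbol \iota(\eta,v)$ in a factored form. Using the basic Jordan identity $P(\eta^{1/2})e = \eta$, I compute
\[
P(\eta^{1/2})\cdot \frac{e-v}{2} = \frac{1}{2}\bigl(\eta - P(\eta^{1/2})v\bigr), \qquad P(\eta^{1/2})\cdot \frac{e+v}{2} = \frac{1}{2}\bigl(\eta + P(\eta^{1/2})v\bigr).
\]
So with $\ell = P(\eta^{1/2})$ we have precisely
\[
\boldsymbol \iota(\eta,v) = \Bigl(\ell\cdot \tfrac{e-v}{2},\ \ell \cdot \tfrac{e+v}{2}\Bigr).
\]

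Next I would apply the covariance identity \eqref{cst}, which gives
\[
c_{\lambda,\mu}^{(k)}\bigl(\boldsymbol \iota(\eta,v)\bigr) = c_{\lambda,\mu}^{(k)}\Bigl(\ell\cdot \tfrac{e-v}{2},\ \ell\cdot \tfrac{e+v}{2}\Bigr) = \chi(\ell)^k\, c_{\lambda,\mu}^{(k)}\Bigl(\tfrac{e-v}{2},\tfrac{e+v}{2}\Bigr).
\]
Finally, by \eqref{chiP} we have $\chi(P(\eta^{1/2})) = (\det \eta^{1/2})^2 = \det \eta$, and by the definition of $C^{(k)}_{\lambda,\mu}$ the remaining factor equals $C^{(k)}_{\lambda,\mu}(v)$, which yields the claim.

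There is no real obstacle here: the only point that requires a moment's thought is recognizing that $\boldsymbol \iota(\eta,v)$ factors through $P(\eta^{1/2})$ applied to $\bigl((e-v)/2, (e+v)/2\bigr)$, after which the result is an immediate consequence of the homogeneity of $c^{(k)}_{\lambda,\mu}$ under the structure group expressed in Lemma \eqref{cst}.
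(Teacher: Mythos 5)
Your proof is correct and follows exactly the same route as the paper: factor $\boldsymbol\iota(\eta,v)$ through $\ell=P(\eta^{1/2})\in G(\Omega)$ acting on $\bigl(\tfrac{e-v}{2},\tfrac{e+v}{2}\bigr)$, apply the covariance identity \eqref{cst}, and evaluate $\chi(P(\eta^{1/2}))=\det\eta$ via \eqref{chiP}. Nothing is missing.
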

\begin{proof} Use the covariance property \eqref{cst} of the polynomials $c_{\lambda,\mu}^{(k)}$ under the action of $G(\Omega)$ to obtain 
\[c_{\lambda,\mu}^{(k)}(\boldsymbol \iota(\eta,v))= c_{\lambda,\mu}^{(k)}\left(P(\eta^{1/2})\frac{e-v}{2}, P(\eta^{1/2})\frac{e+v}{2}\right) 
\]
\[=\chi\left(P(\eta^{1/2})\right)^k c_{\lambda,\mu}^{(k)}\left(\frac{e-v}{2}, \frac{e+v}{2}\right)=  (\det \eta)^kC_{\lambda, \mu}^{(k)} (v)\ ,
\]
by using \eqref{chiP}.
\end{proof}
Hence we may rewrite Theorem \ref{hatB} as follows.
\begin{theorem} 
\begin{equation}\label{hatB2}
\widehat B_{\lambda, \mu}^{(k)} f (\xi) = 2^{-n} i^{rk}(\det \xi)^{k+\frac{n}{r}} 
\int_{\rrbracket -e, e\llbracket}C_{\lambda-\frac{n}{r},\, \mu-\frac{n}{r}}(v) f\left(\boldsymbol \iota(\xi,v)\right) dv\ .
\end{equation}
\end{theorem}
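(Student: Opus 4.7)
The strategy is to substitute directly: Theorem~\ref{hatB} already identifies $\widehat B^{(k)}_{\lambda,\mu}$ with $i^{rk}\,\mathcal J\circ c^{(k)}_{\lambda-\frac{n}{r},\,\mu-\frac{n}{r}}$, and the preceding lemma explicitly evaluates the polynomial $c^{(k)}_{\lambda-\frac{n}{r},\,\mu-\frac{n}{r}}$ on the image of the diffeomorphism $\boldsymbol{\iota}$ in terms of the one-variable polynomial $C^{(k)}_{\lambda-\frac{n}{r},\,\mu-\frac{n}{r}}$. Formula \eqref{hatB2} is nothing but the result of unfolding $\mathcal J$ via the polar-type parametrization of Proposition~\ref{coord} and simplifying the integrand with this lemma.

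Concretely, given $f\in C^\infty_c(\Omega\times\Omega)$, set
\[g(\xi,\zeta) := c^{(k)}_{\lambda-\frac{n}{r},\,\mu-\frac{n}{r}}(\xi,\zeta)\,f(\xi,\zeta),\]
which again lies in $C^\infty_c(\Omega\times\Omega)$ since multiplication by a polynomial preserves this space. By the definition of $\mathcal J$,
\[
\mathcal J g(\xi)=2^{-n}(\det\xi)^{\frac{n}{r}}\int_{\rrbracket-e,+e\llbracket} c^{(k)}_{\lambda-\frac{n}{r},\,\mu-\frac{n}{r}}\big(\boldsymbol{\iota}(\xi,v)\big)\,f\big(\boldsymbol{\iota}(\xi,v)\big)\,dv.
\]
The preceding lemma rewrites $c^{(k)}_{\lambda-\frac{n}{r},\,\mu-\frac{n}{r}}(\boldsymbol{\iota}(\xi,v))=(\det\xi)^k\,C^{(k)}_{\lambda-\frac{n}{r},\,\mu-\frac{n}{r}}(v)$; the factor $(\det\xi)^k$ is independent of $v$ and can be pulled out of the integral, producing the prefactor $(\det\xi)^{k+\frac{n}{r}}$. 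Multiplying by $i^{rk}$ and invoking Theorem~\ref{hatB} yields \eqref{hatB2}.

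There is no real obstacle: the statement is essentially a reformulation of Theorem~\ref{hatB} through the chart furnished by Proposition~\ref{coord}. The only point that requires any comment is that the integrand is genuinely integrable over $\rrbracket-e,+e\llbracket$, which holds because $\boldsymbol{\iota}$ is a diffeomorphism and $f$ (hence $g$) has compact support in $\Omega\times\Omega$; this is exactly the elementary observation already invoked before Lemma~\ref{Laplaceres} to ensure that $\mathcal J$ maps $C^\infty_c(\Omega\times\Omega)$ continuously into $C^\infty_c(\Omega)$.
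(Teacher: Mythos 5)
Your proposal is correct and follows exactly the route the paper takes: the theorem is obtained by composing Theorem \ref{hatB} with the definition of $\mathcal J$ and the preceding lemma evaluating $c^{(k)}_{\lambda-\frac{n}{r},\,\mu-\frac{n}{r}}\circ\boldsymbol\iota$, and the paper itself treats it as an immediate rewriting ("Hence we may rewrite Theorem \ref{hatB} as follows"). Your added remarks on integrability and on multiplication by a polynomial preserving $C^\infty_c(\Omega\times\Omega)$ are consistent with the observations the paper makes just before Lemma \ref{Laplaceres}.
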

\subsection{Continuity and unitarity properties}

When $\lambda, \mu$ are real numbers and $\lambda, \mu > 1+d(r-1)$, it is possible to study continuity and unitarity properties of the Rankin-Cohen brackets using their simple expression in the $L^2$-model. 

Let $h\in C^\infty_c(\Omega)$. The formula
\begin{equation}
\begin{split}
\left(\Phi_{\lambda, \mu}^{(k)} h\right) (\xi,\zeta) =& i^{rk}(\det \xi)^{\lambda-\frac{n}{r}}(\det \zeta)^{\mu-\frac{n}{r}}\det(\xi+\zeta)^{-\lambda-\mu-2k+\frac{n}{r}}\\ &c_{\lambda-\frac{n}{r},\, \mu-\frac{n}{r}}(\xi,\zeta) h(\xi +\zeta)
\end{split}
\end{equation}
defines an operator from $C_c^\infty(\Omega)$ into $C^\infty(\Omega\times \Omega)$.

The Hilbert product on $L^2_\nu(\Omega)$ (resp. on $L^2_{\lambda,\nu}(\Omega\times \Omega)$) induces a duality between $C^\infty_c(\Omega)$ and $C^\infty(\Omega)$ (resp. $C^\infty_c(\Omega\times \Omega)$ and $C^\infty(\Omega\times \Omega)$), denoted by $(h_1,h_2)_{L^2_\nu}$ (resp. $(f_1,f_2)_{L^2_{\lambda, \mu}}$).

\begin{proposition} The operator $\Phi_{\lambda, \mu}^{(k)}$ is the formal adjoint of $\widehat B_{\lambda, \mu}^{(k)}$ with respect to the dualities induced by the Hilbert products on $L^2_{\lambda+\mu+2k}(\Omega)$ and $L^2_{\lambda,\mu}(\Omega\times \Omega)$.
\end{proposition}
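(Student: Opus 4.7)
The plan is to expand both sides of the desired identity
\[
\bigl(\widehat B_{\lambda,\mu}^{(k)} f,\, h\bigr)_{L^2_{\lambda+\mu+2k}} \;=\; \bigl(f,\, \Phi_{\lambda,\mu}^{(k)} h\bigr)_{L^2_{\lambda,\mu}},
\qquad f\in C_c^\infty(\Omega\times\Omega),\ h\in C_c^\infty(\Omega),
\]
as integrals and show they coincide after the change of variables $\boldsymbol\iota$. For the left-hand side I would substitute the formula of Theorem \ref{hatB}, namely $\widehat B_{\lambda,\mu}^{(k)} = i^{rk}\,\mathcal J\circ c^{(k)}_{\lambda-n/r,\mu-n/r}$, together with the weight $(\det\xi)^{-(\lambda+\mu+2k)+n/r}$ of the duality on $L^2_{\lambda+\mu+2k}(\Omega)$. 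This yields an iterated integral over $\Omega\times\rrbracket -e,+e\llbracket$ whose integrand is, up to the scalar $i^{rk}\,2^{-n}$,
\[
c^{(k)}_{\lambda-n/r,\mu-n/r}(\boldsymbol\iota(\xi,v))\,f(\boldsymbol\iota(\xi,v))\,h(\xi)\,(\det\xi)^{-(\lambda+\mu+2k)+2n/r}.
\]

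The key step is then to invoke the integration formula \eqref{varchange} backwards: split $(\det\xi)^{-(\lambda+\mu+2k)+2n/r}$ as $(\det\xi)^{n/r}\cdot(\det\xi)^{-(\lambda+\mu+2k)+n/r}$, absorb the factor $2^{-n}(\det\xi)^{n/r}$ and the integration over $\rrbracket -e,+e\llbracket$ into an integral over $\Omega\times\Omega$ by setting $(x,y)=\boldsymbol\iota(\xi,v)$, and note that $\xi=x+y$ along this substitution. The left-hand side becomes
\[
i^{rk}\iint_{\Omega\times\Omega} c^{(k)}_{\lambda-n/r,\mu-n/r}(x,y)\,f(x,y)\,h(x+y)\,\det(x+y)^{-(\lambda+\mu+2k)+n/r}\,dx\,dy.
\]

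For the right-hand side I would simply plug in the definition of $\Phi_{\lambda,\mu}^{(k)}h$ together with the weight $(\det x)^{-\lambda+n/r}(\det y)^{-\mu+n/r}$ of the duality on $L^2_{\lambda,\mu}(\Omega\times\Omega)$; the factors $(\det x)^{\lambda-n/r}(\det y)^{\mu-n/r}$ coming from $\Phi_{\lambda,\mu}^{(k)}$ cancel the dual weights exactly, leaving the identical expression. Matching the two formulas completes the proof.

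The main obstacle I expect is bookkeeping of the determinant exponents. The whole argument is a calculation, but one has to verify carefully that the exponent of $\det\xi$ arising from the duality weight plus the factor $(\det\xi)^{n/r}$ coming from $\mathcal J$ matches (after writing $\xi=x+y$) the exponent $-(\lambda+\mu+2k)+n/r$ of $\det(x+y)$ appearing in $\Phi_{\lambda,\mu}^{(k)}$, and that the $(\det x)^{\lambda-n/r}(\det y)^{\mu-n/r}$ prefactors in $\Phi_{\lambda,\mu}^{(k)}$ are precisely what is needed to cancel the dual weights $(\det x)^{-\lambda+n/r}(\det y)^{-\mu+n/r}$ of $L^2_{\lambda,\mu}$. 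Once this alignment is checked, the proof is a one-line application of the change-of-variable formula \eqref{varchange}.
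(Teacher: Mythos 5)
Your proof is correct and takes essentially the same route as the paper: both pairings are expanded as integrals and identified via the change-of-variables formula \eqref{varchange}, with the exponents of the determinants cancelling exactly as you describe. The only (immaterial) difference is the direction of the substitution --- you push the $\widehat B_{\lambda,\mu}^{(k)}$ side forward to an integral over $\Omega\times\Omega$, whereas the paper pulls the $\Phi_{\lambda,\mu}^{(k)}$ side back to $\Omega\times\rrbracket -e,+e\llbracket$ --- so it is the same computation read in the opposite direction.
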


\begin{proof} Let $h\in C^\infty_c(\Omega)$ and $f \in C^\infty_c(\Omega\times \Omega)$. Then
\[ \left(h, \widehat B_{\lambda, \mu}^{(k)}f\right)_{L^2_{\lambda+\mu+2k}}= 
\]
\[2^{-n}i^{rk}\int_\Omega h(\xi) (\det \xi)^{k+\frac{n}{r}}\det(\xi)^{-\lambda-\mu-2k+\frac{n}{r}} \int_{\rrbracket -e, +e\, \llbracket} C_{\lambda-\frac{n}{r},\, \mu-\frac{n}{r}}^{(k)}(v)\overline{f(\boldsymbol \iota(\xi,v))} dv d\xi\ .
\]
On the other hand,
\[\left( \Phi_{\lambda,\mu}^{(k)} h, f\right)_{L^2_{\lambda,\mu}}=
\]
\[i^{rk}\iint_{\Omega\times \Omega} h(\xi+\zeta)c_{\lambda-\frac{n}{r},\,\mu-\frac{n}{r}}(\xi,\zeta)\det(\xi+\zeta)^{-\lambda-\mu-2k+\frac{n}{r}}\overline{f(\xi,\zeta)} d\xi d\zeta
\]
\[=2^{-n}i^{rk} \int_\Omega \int_{\rrbracket -e,e\,\llbracket} h(\eta)(\det \eta)^{\frac{n}{r}} (\det \eta)^{-\lambda-\mu-2k+\frac{n}{r}}(\det \eta)^k C_{\lambda-\frac{n}{r},\, \mu-\frac{n}{r}}(v) \overline {f(\boldsymbol \iota(\eta,v)}) dv d\eta
\]
and the result follows.
\end{proof}

\begin{proposition}\label{Phiisom}
 The operator $\Phi_{\lambda,\mu}^{(k)}$ is a partial isometry (up to a scalar) from $L^2_{\lambda+\mu+2k}(\Omega)$ into $L_{\lambda, \mu}^2(\Omega\times\Omega)$.
\end{proposition}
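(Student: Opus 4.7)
The plan is to compute $\|\Phi_{\lambda,\mu}^{(k)}h\|^2_{L^2_{\lambda,\mu}}$ directly for $h\in C^\infty_c(\Omega)$ and show it equals a fixed positive scalar times $\|h\|^2_{L^2_{\lambda+\mu+2k}}$. Once this identity is established on the dense subspace $C^\infty_c(\Omega)\subset L^2_{\lambda+\mu+2k}(\Omega)$, extension by continuity yields the claim that $\Phi_{\lambda,\mu}^{(k)}$ is a partial isometry up to a scalar. The engine driving the calculation is again the change of variables $\boldsymbol \iota$ from Proposition \ref{coord}, which at the very end makes the integrand split as a function of $\eta=\xi+\zeta$ times a function of $v$.

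Unfolding the definition of $\Phi_{\lambda,\mu}^{(k)}$ and combining the four factors of its squared modulus with the weight $(\det\xi)^{-\lambda+n/r}(\det\zeta)^{-\mu+n/r}$ of $L^2_{\lambda,\mu}$, I would write $\|\Phi_{\lambda,\mu}^{(k)}h\|^2_{L^2_{\lambda,\mu}}$ as the integral over $\Omega\times\Omega$ of
\[(\det\xi)^{\lambda-\frac{n}{r}}(\det\zeta)^{\mu-\frac{n}{r}}\det(\xi+\zeta)^{-2\lambda-2\mu-4k+\frac{2n}{r}}\bigl|c^{(k)}_{\lambda-\frac{n}{r},\,\mu-\frac{n}{r}}(\xi,\zeta)\bigr|^2\,|h(\xi+\zeta)|^2\,d\xi\,d\zeta.\]
Setting $(\xi,\zeta)=\boldsymbol \iota(\eta,v)$ one has $\xi+\zeta=\eta$ and, using $\xi=\tfrac12 P(\eta^{1/2})(e-v)$, $\zeta=\tfrac12 P(\eta^{1/2})(e+v)$, the identities $\det\xi = 2^{-r}\det(\eta)\det(e-v)$ and $\det\zeta = 2^{-r}\det(\eta)\det(e+v)$. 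Proposition \ref{coord} gives $d\xi\,d\zeta=2^{-n}(\det\eta)^{n/r}\,d\eta\,dv$, while the lemma preceding the restated Theorem \ref{hatB2} yields $c^{(k)}_{\lambda-n/r,\mu-n/r}(\boldsymbol \iota(\eta,v))=(\det\eta)^k C^{(k)}_{\lambda-n/r,\,\mu-n/r}(v)$. Summing the powers of $\det\eta$ produces
\[(\lambda-\tfrac{n}{r})+(\mu-\tfrac{n}{r})+(-2\lambda-2\mu-4k+\tfrac{2n}{r})+2k+\tfrac{n}{r}=-(\lambda+\mu+2k)+\tfrac{n}{r},\]
which is precisely the exponent in the weight defining $L^2_{\lambda+\mu+2k}(\Omega)$. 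The integrand therefore factors, and one arrives at $\|\Phi_{\lambda,\mu}^{(k)}h\|^2_{L^2_{\lambda,\mu}}=\kappa_{\lambda,\mu}^{(k)}\,\|h\|^2_{L^2_{\lambda+\mu+2k}}$ with
\[\kappa_{\lambda,\mu}^{(k)}=2^{n-r(\lambda+\mu)}\int_{\rrbracket-e,+e\,\llbracket}\det(e-v)^{\lambda-\frac{n}{r}}\det(e+v)^{\mu-\frac{n}{r}}\bigl|C^{(k)}_{\lambda-\frac{n}{r},\,\mu-\frac{n}{r}}(v)\bigr|^2 dv.\]

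The routine part is the bookkeeping of the exponents; the only real point of care is the finiteness and strict positivity of $\kappa_{\lambda,\mu}^{(k)}$. Under the standing hypothesis $\lambda,\mu>1+d(r-1)$ the weights $\det(e\pm v)^{\lambda-n/r}$ and $\det(e\pm v)^{\mu-n/r}$ are integrable on the Jordan-algebraic interval $\rrbracket-e,+e\,\llbracket$ (a classical Beta-type integral on a Euclidean Jordan algebra, see \cite{fk}), and since $C^{(k)}_{\lambda-n/r,\,\mu-n/r}$ is a polynomial on $V$ the extra factor is harmless; positivity is automatic because $C^{(k)}_{\lambda-n/r,\,\mu-n/r}$ is not identically zero. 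With $\kappa_{\lambda,\mu}^{(k)}>0$ in hand, the operator $(\kappa_{\lambda,\mu}^{(k)})^{-1/2}\Phi_{\lambda,\mu}^{(k)}$ preserves the $L^2$-norm on $C^\infty_c(\Omega)$ and thus extends by density to an isometry from $L^2_{\lambda+\mu+2k}(\Omega)$ into $L^2_{\lambda,\mu}(\Omega\times\Omega)$, which is exactly the partial-isometry-up-to-scalar property claimed.
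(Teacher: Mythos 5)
Your proposal is correct and follows essentially the same route as the paper: unfold $\vert\Phi_{\lambda,\mu}^{(k)}h\vert^2$ against the $L^2_{\lambda,\mu}$ weight, apply the change of variables $\boldsymbol\iota$ together with $\det\xi=2^{-r}\det\eta\,\det(e-v)$, $\det\zeta=2^{-r}\det\eta\,\det(e+v)$ and $c^{(k)}(\boldsymbol\iota(\eta,v))=(\det\eta)^k C^{(k)}(v)$, and observe that the powers of $\det\eta$ collapse to the $L^2_{\lambda+\mu+2k}$ weight so the integral factors; your constant agrees with the paper's $c(\lambda,\mu;k)$. The only addition is your explicit check that the constant is finite and strictly positive, which the paper leaves implicit and which is a welcome (and correct) remark.
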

\begin{proof}
\[\Vert \Phi_{\lambda, \mu}^{(k)}h\Vert^2 = 
\]
\[\int_\Omega \int_\Omega \vert \Phi_{\lambda, \mu}^{(k)}h(\xi,\zeta)\vert^2 (\det \xi)^{-\lambda+\frac{n}{r}} (\det \zeta)^{-\mu+\frac{n}{r}} d\xi d\zeta
\]
\[=\int_\Omega \int_\Omega \vert h(\xi+\zeta)\vert^2 \vert c^{(k)}_{\lambda-\frac{n}{r}, \mu-\frac{n}{r}}(\xi,\zeta)\vert^2 (\det \xi)^{\lambda-\frac{n}{r}}(\det \zeta)^{\mu-\frac{n}{r}}\det(\xi+\zeta)^{-2\lambda-2\mu-4k+\frac{2n}{r}}
 d\xi d\zeta\ .
 \]
 Now use the coordinate change $(\xi,\zeta) = \boldsymbol \iota(\eta,v)$. Notice that
 \[\det \xi = 2^{-r} \det \eta \,\det(e-v),\quad \det \zeta = 2^{-r} \det \eta \,\det(e+v),
 \]
 \[ c_{\lambda-\frac{n}{r}, \mu-\frac{n}{r}}^ {(k)}(\xi, \zeta) =(\det \eta)^k\, C^{(k)}_{\lambda-\frac{n}{r},\, \mu-\frac{n}{r}}(v) \ ,
 \]
 so that the integral becomes
 \[2^{-r\lambda-r\mu+n} \int_{ \rrbracket-e,+e\llbracket }\vert C^{(k)}_{\lambda-\frac{n}{r}, \mu-\frac{n}{r}}(v)\vert^2 \det(e-v)^{\lambda-\frac{n}{r}} \det(e+v)^{\mu-\frac{n}{r}} dv
 \]
 \[\times \int_\Omega \vert h(\eta)\vert^2(\det \eta) ^{-\lambda-\mu-2k+\frac{n}{r}} d\eta
 \]
 \[= c(\lambda, \mu; k) \Vert h\Vert^2_{\lambda+\mu+2k},
 \]
 where
 \[c(\lambda, \mu; k) = 2^{-r\lambda-r\mu+n} \int_{ \rrbracket-e,+e\llbracket }\vert C_{\lambda-\frac{n}{r}, \mu-\frac{n}{r}}(v)\vert^2 \det(e-v)^{\lambda-\frac{n}{r}} \det(e+v)^{\mu-\frac{n}{r}} dv\ .
 \]
\end{proof}

\begin{theorem} The operator $B_{\lambda, \mu}^{(k)}$ is continuous from $\mathcal H_{\lambda, \mu}$ into $\mathcal H_{\lambda+\mu+2k}$. Its adjoint ${B_{\lambda, \mu}^{(k)}}^*$ is (up to a scalar) a partial isometry from $\mathcal H_{\lambda+\mu+2k}$ into $\mathcal H_{\lambda, \mu}$.
\end{theorem}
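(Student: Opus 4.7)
The plan is to transport the statement entirely to the $L^2$-model via the Laplace isometries, where the work has essentially already been done. Recall that, by definition, $B_{\lambda,\mu}^{(k)} = \mathcal L \circ \widehat B_{\lambda,\mu}^{(k)} \circ \mathcal L_2^{-1}$, and that $\mathcal L$ and $\mathcal L_2$ are, up to explicit positive scalars, isometries between the weighted $L^2$-spaces and the weighted Bergman spaces. Since conjugation by (scalar multiples of) isometries preserves both boundedness and the partial-isometry property, it suffices to establish the corresponding assertions for $\widehat B_{\lambda,\mu}^{(k)}$ viewed as an operator on the $L^2$-side.

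The first step is to extend $\widehat B_{\lambda,\mu}^{(k)}$ from $C^\infty_c(\Omega\times\Omega)$ to a bounded operator from $L^2_{\lambda,\mu}(\Omega\times\Omega)$ into $L^2_{\lambda+\mu+2k}(\Omega)$. By Proposition \ref{Phiisom}, the operator $\Phi_{\lambda,\mu}^{(k)}$ is bounded on $C^\infty_c(\Omega)$ with operator norm $\sqrt{c(\lambda,\mu;k)}$ (with respect to the relevant weighted $L^2$-norms), hence extends by density to a bounded operator on the whole Hilbert space. Combined with the formal adjointness relation
\[
\bigl(h,\widehat B_{\lambda,\mu}^{(k)} f\bigr)_{L^2_{\lambda+\mu+2k}} = \bigl(\Phi_{\lambda,\mu}^{(k)} h, f\bigr)_{L^2_{\lambda,\mu}},
\qquad h\in C^\infty_c(\Omega),\ f\in C^\infty_c(\Omega\times\Omega),
\]
and the density of $C^\infty_c$, this forces $\widehat B_{\lambda,\mu}^{(k)}$ to extend uniquely to a bounded operator between the two weighted $L^2$-spaces, with operator norm bounded by $\sqrt{c(\lambda,\mu;k)}$ and with Hilbert-space adjoint equal to $\Phi_{\lambda,\mu}^{(k)}$.

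The second step is the transfer. Writing $J_1 = c_1\,\mathcal L$ and $J_2 = c_2\,\mathcal L_2$ with the normalizing constants read off from Proposition 1.3.1 (so that $J_1,J_2$ are honest unitary isomorphisms), we get $B_{\lambda,\mu}^{(k)} = (c_2/c_1)\, J_1 \circ \widehat B_{\lambda,\mu}^{(k)} \circ J_2^{-1}$, hence $B_{\lambda,\mu}^{(k)}$ is bounded from $\mathcal H_{\lambda,\mu}$ into $\mathcal H_{\lambda+\mu+2k}$. Taking Hilbert-space adjoints,
\[
\bigl(B_{\lambda,\mu}^{(k)}\bigr)^* = (c_2/c_1)\, J_2 \circ \Phi_{\lambda,\mu}^{(k)} \circ J_1^{-1},
\]
which, being the composition of two unitary maps with a partial isometry (up to a scalar), is itself a partial isometry up to a scalar from $\mathcal H_{\lambda+\mu+2k}$ into $\mathcal H_{\lambda,\mu}$.

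The only delicate point is the passage from the formal adjoint relation, valid a priori only on the dense subspaces $C^\infty_c$, to a genuine Hilbert-space adjoint between bounded operators; this is precisely where the boundedness of $\Phi_{\lambda,\mu}^{(k)}$ secured by Proposition \ref{Phiisom} is indispensable. Everything else is routine bookkeeping of scalar constants coming from the normalizations of the two Laplace isometries and of the partial isometry constant $c(\lambda,\mu;k)$.
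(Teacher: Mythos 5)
Your proof is correct and follows essentially the same route as the paper: it deduces boundedness of $\widehat B_{\lambda,\mu}^{(k)}$ from the partial-isometry property of its formal adjoint $\Phi_{\lambda,\mu}^{(k)}$ (Proposition \ref{Phiisom}) and then transfers both conclusions to the Bergman-space picture via the Laplace isometries. You merely make explicit the density and normalization bookkeeping that the paper leaves implicit.
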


\begin{proof} Recall that $\Phi_{\lambda, \mu}^{(k)}$ is  the dual of the operator $\widehat B_{\lambda, \mu}^{(k)}$. From Proposition \ref{Phiisom} follows that $\widehat B_{\lambda, \mu}^{(k)}$ is continuous from $L^2_{\lambda,\mu}(\Omega\times \Omega)$ into $L^2_{\lambda+\mu+2k}(\Omega)$. Now via Laplace transform/inverse Laplace transform, $B_{\lambda, \mu}^{(k)}$ is continuous from $\mathcal H_{\lambda, \mu}$ into $\mathcal H_{\lambda+\mu+2k}$. Similarly, its adjoint ${B_{\lambda, \mu}^{(k)}}^*$ corresponds to $\Phi_{\lambda,\mu}^{(k)}$ and hence is a partial isometry (up to a scalar).
\end{proof}

Said differently, the last result shows that the orthogonal decomposition of the tensor product $\pi_\lambda\otimes \pi_\mu$ into irreducible representations contains a copy of $\pi_{\lambda+\mu+2k}$ for each $k\in \mathbb N$ (cf \cite{pz}).

\section{The covariance property}

The Rankin-Cohen bracket $B^{(k)}_{\lambda, \mu}$ satisfies a covariance property  under the action of $\widetilde G$. The proof of this property is achieved in three steps. For the first two steps, we assume that $\lambda,\mu>1+(r-1)d$. 

The first step is to calculate the image by the adjoint operator   ${B^{(k)}_{\lambda, \mu}}^*$ of a specific vector  $k_\nu^{ie} \in \mathcal H_{\lambda+\mu+2k}$ . This is done in subsection 3.1.

In the second step (subsection 3.2), the covariance under the group of translations and under the group $L$ is easily obtained in the $L^2$-model of the representations. In turn, this limited covariance property allows to compute the image  by the adjoint operator   ${B^{(k)}_{\lambda, \mu}}^*$ of all \emph{coherent states} $(k_\nu^w)_{w\in T_\Omega}$ from the computation made in the first step for $k_\nu^{ie}$. The covariance property of  ${B^{(k)}_{\lambda, \mu}}^*$ under $\widetilde G$ is then verified on the coherent states and hence globally by continuity. The desired covariance property of $B^{(k)}_{\lambda,\mu}$ under $\widetilde G$ follows. However, the covariance property holds for $B^{(k)}_{\lambda,\mu}$ viewed as an operator from $\mathcal H_{\lambda,\mu}$ into $\mathcal H_{\lambda+\mu+2k}$. An extra argument extends the property to the corresponding Montel spaces.

The third step is an analytic continuation argument which allows to extend the covariance property of $B^{(k)}_{\lambda,\mu}$ to arbitrary parameters $\lambda,\mu\in \mathbb C$.

\subsection{ Reproducing kernel and coherent states}

Let $z\in T_\Omega$. Then $\det z\neq 0$. As $T_\Omega$ is simply connected, there exists a global definition of $\log (\det \left(\frac{z}{i}\right))$ and the natural choice is the one such that $ \log (\det \left(\frac{iy}{i}\right)) = \ln (\det y)$ for $y\in \Omega$. This convention is tacitly used in the sequel, in particular for defining the powers
\[det\left( \frac{z}{i}\right)^\nu= e^{\nu \log (\det  (\frac{z}{i}))}, \qquad z\in T_\Omega\ .
\]
For $\nu>1+(r-1)d$, the reproducing kernel of $\mathcal H_\nu(T_\Omega)$ (after renormalization of the norm)  is given by
\begin{equation}
k_\nu(z,w) = \det\left( \frac{z-\overline w}{i}\right)^{-\nu}\ .
\end{equation}
See \cite{fk} Proposition XIII.1.2.

Let $k_\nu^w$ be the \emph{coherent state} associated to $w\in T_\Omega$ defined by
\[k_\nu^w(z) = k(z,w)=  \det\left( \frac{z-\overline w}{i}\right)^{-\nu}
\]
which belongs to $\mathcal H_\nu$.

First  consider the case $w=ie$. Adapting the notation, define

\[\varphi_\nu (z) = k_\nu^{ie}(z) = \det(\frac{z+ie}{i})^{-\nu}, \qquad z\in T_\Omega \ .
\]
Let also
\[\psi_\nu (\xi) = e^{-\tr \xi} (\det \xi)^{\nu-\frac{n}{r}},\qquad x\in \Omega\ .
\]
\begin{lemma} Assume $\nu>1+(r-1)d$. Then
\smallskip

$i)$ $\psi_\nu$ belongs to $L^2_\nu(\Omega)$
\smallskip

$ii)$ $\mathcal L (\psi_\nu) = \Gamma_\Omega(\nu) \varphi_\nu$\ .

\end{lemma}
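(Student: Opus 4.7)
The plan is to treat the two statements by direct computation, reducing both to the defining integral of $\Gamma_\Omega$.

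For part $(i)$, I would simply expand the norm using the definition of $L^2_\nu(\Omega)$:
\[
\Vert \psi_\nu\Vert^2_{L^2_\nu} = \int_\Omega e^{-2\tr \xi}(\det \xi)^{2\nu-\frac{2n}{r}}\det(\xi)^{-\nu+\frac{n}{r}}d\xi = \int_\Omega e^{-2\tr \xi}(\det \xi)^{\nu-\frac{n}{r}}d\xi.
\]
After the change of variable $\xi = x/2$, using $\det(x/2) = 2^{-r}\det x$ and $dx = 2^n d\xi$, this becomes (up to an explicit power of $2$) the integral defining $\Gamma_\Omega(\nu)$. Since the hypothesis $\nu>1+(r-1)d$ is stronger than the convergence condition $\Re\nu > (r-1)d/2$ recalled in \eqref{Gamma}, the integral is finite, and we obtain $\Vert \psi_\nu\Vert^2_{L^2_\nu} = 2^{-r\nu}\Gamma_\Omega(\nu)$.

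For part $(ii)$, the idea is that $\mathcal L \psi_\nu$ is obtained from the gamma integral by a complex translation of the argument. Compute, for $z \in T_\Omega$,
\[
\mathcal L \psi_\nu(z) = \int_\Omega e^{i(z,\xi)}e^{-(e,\xi)}(\det\xi)^{\nu-\frac{n}{r}} d\xi = \int_\Omega e^{i(z+ie,\,\xi)}(\det\xi)^{\nu-\frac{n}{r}} d\xi,
\]
where I used $(e,\xi) = \tr \xi$. The remaining integral is the classical Laplace transform of $(\det\xi)^{\nu-\frac{n}{r}}$, which by the fundamental identity (see Faraut--Koranyi, Chapter VII, consequence of \eqref{Gamma} and analytic continuation) equals
\[
\int_\Omega e^{i(w,\xi)}(\det\xi)^{\nu-\frac{n}{r}} d\xi = \Gamma_\Omega(\nu)\det\!\left(\frac{w}{i}\right)^{-\nu}
\]
for every $w \in T_\Omega$, with the determination of the power fixed by the convention adopted just before the statement (agreement with the real branch for $w=iy$, $y \in \Omega$). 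Applying this with $w = z+ie \in T_\Omega$ gives
\[
\mathcal L\psi_\nu(z) = \Gamma_\Omega(\nu)\det\!\left(\frac{z+ie}{i}\right)^{-\nu} = \Gamma_\Omega(\nu)\varphi_\nu(z).
\]

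The only point that requires mild care is the verification that $z + ie \in T_\Omega$ (clear, since the imaginary part is $\Im z + e \in \Omega + e \subset \Omega$) and that the branch of $\det((z+ie)/i)^{-\nu}$ produced by the integral coincides with the chosen one; this is immediate by checking the value on the pure-imaginary slice $z = iy$, $y \in \Omega$, and invoking uniqueness of holomorphic extension on the simply connected domain $T_\Omega$. Everything else is routine.
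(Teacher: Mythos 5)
Your proof is correct and follows essentially the same route as the paper: the same reduction of $\Vert\psi_\nu\Vert^2_{L^2_\nu}$ to $2^{-r\nu}\Gamma_\Omega(\nu)$ for $(i)$, and for $(ii)$ the identity $\int_\Omega e^{-(y,\xi)}(\det\xi)^{\nu-\frac{n}{r}}d\xi=\Gamma_\Omega(\nu)(\det y)^{-\nu}$ continued analytically to $y=\frac{z+ie}{i}$. The only cosmetic difference is that the paper re-derives that identity from \eqref{Gamma} via the substitution $\xi=P(y^{1/2})\eta$ rather than citing it from Faraut--Kor\'anyi.
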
 

\begin{proof}

For $i)$ 
\begin{equation*}
\begin{split}
\Vert \Psi_\nu\Vert^2=\int_\Omega e^{2 \tr \xi}\det \xi^{\nu-\frac{n}{r}} d\xi
\\
 = 2^{-r\nu}\int_\Omega e^{-\tr \eta} (\det \eta)^{\nu-\frac{n}{r}} d\eta\ .
\end{split}
\end{equation*}
By assumption, $\nu>1+(r-1)d$ and hence $\nu-\frac{n}{r} >\frac{r-1}{2} d$, so that $\nu\frac{n}{r}$ belongs to the domain of absolute convergence of  the integral defining $\Gamma_\Omega$ (see comments after the definition of $\Gamma_\Omega$ \eqref{Gamma}).

Let $y\in \Omega$ and make the change of variable $\xi = P(y^{1/2})\eta$. As
\[(e,P(y^{1/2}) \eta) = (y,\eta)\ , \qquad \det P(y^{1/2}) \eta = \det y \det \eta\ , \qquad d\xi = (\det y)^{\frac{n}{r}} d\eta\ ,
\]
follows
\[\int_\Omega e^{-(y,\eta)}(\det \eta)^{\nu-\frac{n}{r}} d\eta = \Gamma_\Omega(\nu) (\det y)^{-\nu}\ .
\]
This identity can be extended analytically to $y$ in the right half-plane, and in particular to $\displaystyle y=e-iz = \frac{z+ie}{i}$ for $z\in T_\Omega$, thus yielding $ii)$.
\end{proof}

\begin{proposition}\label{imphi}
 Let $\lambda, \mu > 1+(r-1)d$ and let $k\in \mathbb N$. Then
\begin{equation*}
\begin{split}
{B_{\lambda, \mu}^{(k)}}^* \varphi_{\lambda+\mu+2k} &(z_1,z_2)\\= (-i)^{rk} \Gamma_\Omega(\lambda+k)\Gamma(\mu+k)
\det(z_1-z_2)^k &\det\big(\frac{z_1+ie}{i}\big)^{-\lambda-k} \det\big(\frac{z_2+ie}{i}\big)^{-\mu-k}\ .
\end{split}
\end{equation*}
\end{proposition}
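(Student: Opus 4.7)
The natural approach is to transfer the computation to the $L^2$-model, where the adjoint of $\widehat B_{\lambda,\mu}^{(k)}$ is the explicit operator $\Phi_{\lambda,\mu}^{(k)}$. By the preceding lemma, $\mathcal L^{-1}\varphi_{\lambda+\mu+2k}$ is a scalar multiple of $\psi_{\lambda+\mu+2k}$. Consequently, modulo a global constant that collects the (non-unitary) normalization factors of $\mathcal L$ and $\mathcal L_2$ together with $1/\Gamma_\Omega(\lambda+\mu+2k)$, the calculation of ${B_{\lambda,\mu}^{(k)}}^{*}\varphi_{\lambda+\mu+2k}$ reduces to evaluating $\mathcal L_{2}\bigl(\Phi_{\lambda,\mu}^{(k)}\psi_{\lambda+\mu+2k}\bigr)$.

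The decisive observation is the cancellation inside $\Phi_{\lambda,\mu}^{(k)}\psi_{\lambda+\mu+2k}$: the factor $\det(\xi+\zeta)^{-\lambda-\mu-2k+n/r}$ appearing in the definition of $\Phi$ exactly kills the factor $\det(\xi+\zeta)^{\lambda+\mu+2k-n/r}$ coming from $\psi_{\lambda+\mu+2k}(\xi+\zeta)=e^{-\tr(\xi+\zeta)}\det(\xi+\zeta)^{\lambda+\mu+2k-n/r}$, leaving the clean expression
$\Phi_{\lambda,\mu}^{(k)}\psi_{\lambda+\mu+2k}(\xi,\zeta)=i^{rk}(\det\xi)^{\lambda-\frac{n}{r}}(\det\zeta)^{\mu-\frac{n}{r}}c_{\lambda-\frac{n}{r},\mu-\frac{n}{r}}^{(k)}(\xi,\zeta)\,e^{-\tr(\xi+\zeta)}.$

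To compute $\mathcal L_2$ of this, I would invoke the Rodrigues-type definition of the polynomial $c_{s,t}^{(k)}$, namely
$(\det\xi)^{s}(\det\zeta)^{t}\,c_{s,t}^{(k)}(\xi,\zeta)=\det\bigl(\tfrac{\partial}{\partial\xi}-\tfrac{\partial}{\partial\zeta}\bigr)^{k}\bigl[(\det\xi)^{s+k}(\det\zeta)^{t+k}\bigr],$
and then integrate by parts $rk$ times in the Laplace integral. Boundary terms vanish because the hypothesis $\lambda,\mu>1+(r-1)d$ forces $(\det\xi)^{\lambda+k-n/r}$ (and its $\zeta$-analogue) to vanish to sufficient order on $\partial\Omega$. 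The operator $\det(\partial_\xi-\partial_\zeta)^k$, transferred onto the exponential $e^{-(y_1,\xi)-(y_2,\zeta)}$ with $y_j=(z_j+ie)/i$, acts as multiplication by $\det(y_1-y_2)^k=\det((z_1-z_2)/i)^k$. The remaining double integral factorizes into two independent gamma-function integrals of exactly the type used in the preceding lemma, producing $\Gamma_\Omega(\lambda+k)\,\Gamma_\Omega(\mu+k)\,\det((z_1+ie)/i)^{-\lambda-k}\det((z_2+ie)/i)^{-\mu-k}$.

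Assembling the pieces, using the scalar identity $\det(\alpha x)=\alpha^r\det(x)$ to convert $\det((z_1-z_2)/i)^k$ into a power of $\det(z_1-z_2)$ (producing a factor $(-i)^{rk}$), and combining with the scalar normalization constants, yields the stated formula. The only real difficulty is the careful bookkeeping of signs and powers of $i$: those arising from integration by parts, from the identity $\det(-v)=(-1)^r\det(v)$, from the complexification $y_j=(z_j+ie)/i$, from the $i^{rk}$ appearing in $\Phi$, and from the non-unitary scaling of the Laplace transforms. Structurally, however, the proof is nothing more than a methodical unwinding of the definitions.
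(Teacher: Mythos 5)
Your proposal is correct and follows essentially the same route as the paper: transfer to the $L^2$-model via $\mathcal L(\psi_\nu)=\Gamma_\Omega(\nu)\varphi_\nu$, observe the cancellation of the $\det(\xi+\zeta)$ powers in $\Phi_{\lambda,\mu}^{(k)}\psi_{\lambda+\mu+2k}$, substitute the Rodrigues expression for $c_{s,t}^{(k)}$, integrate by parts using the fact that $\det(\partial_\xi-\partial_\zeta)^k$ annihilates functions of $\xi+\zeta$ and acts on the exponential as multiplication by $\det\bigl(\tfrac{z_1-z_2}{i}\bigr)^{-k}$-type factors, and factor the result into two $\Gamma_\Omega$-integrals. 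If anything, you are slightly more explicit than the paper about the global normalization constants (the scalars in the Laplace isometries and the $\Gamma_\Omega(\lambda+\mu+2k)$ from passing between $\varphi$ and $\psi$) and about where the powers of $i$ and $(-1)^{rk}$ come from, which the paper's own computation treats rather loosely.
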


\begin{proof}  First, recall from the previous section that $\Phi_{\lambda, \mu}^{(k)}$ corresponds under Laplace/inverse Laplace transforms to ${B_{\lambda, \mu}^{(k)}}^* $. Hence it is wise to first calculate $\Phi_{\lambda,\mu}^{(k)} \psi_{\lambda+\mu+2k}$ and then take its Laplace transform. Now
\begin{equation*}
\Phi_{\lambda,\mu}^{(k)}(\psi_{\lambda+\mu+2k})(\xi_1, \xi_2) =i^{rk} (\det \xi_1)^{\lambda-\frac{n}{r}} (\det \xi_2)^{\mu-\frac{n}{r}} e^{-\tr \xi_1} e^{-\tr \xi_2} \, c_{\lambda-\frac{n}{r},\, \mu-\frac{n}{r}}^{(k)}(\xi_1, \xi_2)\ .
\end{equation*}
The Laplace transform of this function is
\[i^{rk}\int_\Omega \int_\Omega e^{i\big((z_1,\xi_1)+(z_2,\xi_2)\big)} \det \xi_1^{\lambda-\frac{n}{r}} \det \xi_2^{\mu-\frac{n}{r}} e^{-\tr \xi_1} e^{-\tr \xi_2} \, c_{\lambda-\frac{n}{r},\, \mu-\frac{n}{r}}^{(k)}(\xi_1, \xi_2) \, d\xi_1 \, d\xi_2\ .
\]
Recall that 
\[\det \xi_1^{\lambda-\frac{n}{r}} \det \xi_2^{\mu-\frac{n}{r}}c_{\lambda-\frac{n}{r},\, \mu-\frac{n}{r}}^{(k)}(\xi_1, \xi_2) = \det\left(\frac{\partial}{\partial \xi_1}-\frac{\partial}{\partial \xi_2}\right)^k\left( \det \xi_1^{\lambda+k-\frac{n}{r}} \det \xi_2^{\mu+k-\frac{n}{r}} \right),
\]
and substitute this expression in the integral to obtain
\[i^{rk}\int_\Omega \int_\Omega e^{i\big((z_1,\xi_1)+(z_2,\xi_2)\big)} e^{-\tr(\xi_1+\xi_2)}\det\left(\frac{\partial}{\partial \xi_1}-\frac{\partial}{\partial \xi_2}\right)^k\left( \det \xi_1^{\lambda+k-\frac{n}{r}} \det \xi_1^{\mu+k-\frac{n}{r}} \right) d\xi_1 d\xi_2\ .
\]
Next integrate by parts. Notice that for  any smooth function  $f$ on $\Omega$ and any polynomial $p$ on $V$
\[p\left(\left(\frac{ \partial}{\partial \xi_1} - \frac{ \partial}{\partial \xi_2}\right)\right) f(\xi_1+\xi_2) = 0, 
\]
so that 
\[\det\left(\frac{\partial}{\partial \xi_1}-\frac{\partial}{\partial \xi_2}\right)^k \left(e^{i\big((z_1,\xi_1)+(z_2,\xi_2)\big)} e^{-\tr(\xi_1+\xi_2)} \right)\]
\[= i^{rk} \det(z_1-z_2)^k e^{i\big((z_1,\xi_1)+(z_2,\xi_2)\big)} e^{-\tr(\xi_1+\xi_2)}\ .
\]
Moreover, the condition on $\lambda$ and $\mu$ implies $\lambda-\frac{n}{r}, \mu-\frac{n}{r}>(r-1)\frac{d}{2}$, which in turn implies the vanishing of the border contributions when performing the integrations by parts. Thus the integral is equal to
\[(\det(z_1-z_2)^k  \int_\Omega \int_\Omega  e^{-\tr\xi} \det \xi^{\lambda+k-\frac{n}{r}} e^{-\tr \zeta}\det \zeta^{\mu+k-\frac{n}{r}} d\xi d\zeta
\]
\[=\Gamma_\Omega(\lambda+k)\Gamma_\Omega(\mu+k)\det(z_1-z_2)^k \det\big( \frac{z_1+ie}{i}\big)^{-\lambda-k} \det\big(\frac{z_2+ie}{i}\big)^{-\mu-k}\ .
\]
\end{proof}
The next two lemmas will be needed in the sequel.
\begin{lemma} Let $\nu>1+(r-1)d$. For any $\widetilde g=(g,\psi_g)\in \widetilde G$ and $w\in T_\Omega$, the following identity holds true :
\begin{equation}\label{kwkgw}
\pi_\nu(\widetilde g)k_\nu^w = e^{\frac{r\nu}{2n}\, \overline{\psi_g(w)}}\,\, k_\nu^{g(w)}\ .
\end{equation}
\end{lemma}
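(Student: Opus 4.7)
The plan is to exploit the fact that $\pi_\nu$ is unitary on $\mathcal H_\nu$ together with the defining property of the reproducing kernel, namely $F(w) = \langle F, k_\nu^w\rangle$ for every $F\in \mathcal H_\nu$ and $w\in T_\Omega$.

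Fix $F\in \mathcal H_\nu$ and compute $\langle F,\pi_\nu(\widetilde g)k_\nu^w\rangle$ in two ways. First, by unitarity,
\[
\langle F,\pi_\nu(\widetilde g)k_\nu^w\rangle = \langle \pi_\nu(\widetilde g)^{-1}F,k_\nu^w\rangle = \bigl(\pi_\nu(\widetilde g^{-1})F\bigr)(w).
\]
Now use the explicit description of the inverse in $\widetilde G$ recalled in the text: if $\widetilde g=(g,\psi_g)$, then $\widetilde g^{-1}=(g^{-1},-\psi_g\circ g^{-1})$. A direct unwinding of the inversion formula applied once more to $\widetilde g^{-1}$ shows that the cocycle appearing in $\pi_\nu(\widetilde g^{-1})$ — namely the one associated with $(g^{-1})^{-1}=g$ — is simply $\psi_g$. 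Hence, by the definition \eqref{defpi} of $\pi_\nu$,
\[
\bigl(\pi_\nu(\widetilde g^{-1})F\bigr)(w) = e^{\frac{r\nu}{2n}\psi_g(w)}\,F\bigl(g(w)\bigr) = e^{\frac{r\nu}{2n}\psi_g(w)}\,\langle F,k_\nu^{g(w)}\rangle,
\]
where in the last equality I invoked the reproducing kernel property at the point $g(w)\in T_\Omega$.

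Collecting the two computations yields, for every $F\in\mathcal H_\nu$,
\[
\langle F,\pi_\nu(\widetilde g)k_\nu^w\rangle = e^{\frac{r\nu}{2n}\psi_g(w)}\,\langle F,k_\nu^{g(w)}\rangle = \bigl\langle F,\,\overline{e^{\frac{r\nu}{2n}\psi_g(w)}}\,k_\nu^{g(w)}\bigr\rangle,
\]
where the last step uses the conjugate-linearity of $\langle\cdot,\cdot\rangle$ in the second argument together with $\nu\in\mathbb R$, which turns $\overline{e^{\frac{r\nu}{2n}\psi_g(w)}}$ into $e^{\frac{r\nu}{2n}\overline{\psi_g(w)}}$. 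Since $F$ is arbitrary in $\mathcal H_\nu$, the identity \eqref{kwkgw} follows at once.

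The only delicate point is the careful bookkeeping of the cocycle: one must verify that the exponent emerging from $\pi_\nu(\widetilde g^{-1})$ is $\psi_g$ and not $-\psi_g\circ g^{-1}$ or some other variant; this is the sole subtlety, and it is settled by applying the inversion rule twice and evaluating at $w$ rather than at $g(w)$. Everything else is a mechanical application of unitarity and the reproducing property.
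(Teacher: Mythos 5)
Your proof is correct and takes essentially the same route as the paper: both compute the pairing of an arbitrary $F\in\mathcal H_\nu$ against $\pi_\nu(\widetilde g^{\pm1})k_\nu^w$ in two ways, once via unitarity plus the reproducing property and once via the explicit formula \eqref{defpi}. Your explicit unwinding of the cocycle of $\widetilde g^{-1}$ (showing it is $\psi_g$) merely makes explicit the substitution $\widetilde g\mapsto\widetilde g^{-1}$ that the paper leaves implicit in its final line.
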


\begin{proof} Let  $F\in \mathcal H_\nu$. Then
\[\pi_\nu(\widetilde g)F(w)= \langle \pi_\nu(\widetilde g) F, k_\nu^w\rangle = \langle F, \pi_\nu(\widetilde g^{-1})k^w_\nu\rangle\ ,\]
and on the other hand,
\[ \pi_\nu(\widetilde g)F(w)= e^{\frac{r\nu}{2n}\psi_{g^{-1}}(w)} F\big(g^{-1}(w)\big)
=e^{\frac{r\nu}{2n}\psi_{g^{-1}}(w)} \langle F, k_\nu^{g^{-1}(w)}\rangle\ .
\]
Hence
\[\pi_\nu(\widetilde g^{-1}) k_\nu^w = e^{\frac{r\nu}{2n}\overline{\psi_{g^{-1}(w)}}}\,k^{g^{-1}(w)}_\nu\ .
\]
\end{proof}
\begin{lemma} Let $g\in G(T_\Omega)$. Then for any $z,w \in T_\Omega$
\begin{equation}\label{covdet}
\det(g(z)-\overline{g(w)}) = j(g, z)^{1/2} \det(z-\overline w)\, \overline{j(g, w)}^{1/2}\ .
\end{equation}
with the convention that $j(g, z)^{1/2}$ and $j(g, w)^{1/2}$ are computed using the same determination of the square root of $j(g,.)$.
\end{lemma}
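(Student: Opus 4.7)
The plan is to reduce \eqref{covdet} to the three families of generators of $G(T_\Omega)$ identified in Proposition 1.1 --- the translations $N$, the linear group $G(\Omega)$, and the Jordan inversion $j_0:z\mapsto -z^{-1}$ --- and then propagate by a cocycle argument. Both sides of \eqref{covdet} behave multiplicatively in $g$: the right-hand side through the chain-rule cocycle $j(g_1g_2,z)=j(g_1,g_2(z))\,j(g_2,z)$ together with multiplicativity of $\det$, and the left-hand side by direct composition. Verification on generators therefore suffices, provided the square-root convention is chosen coherently.

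The cases $g=t_u$ and $g=\ell\in G(\Omega)$ are straightforward. For a translation by $u\in V$ (real), $\overline{g(w)}=\bar w+u$ gives $g(z)-\overline{g(w)}=z-\bar w$ and $Dg\equiv\Id$ gives $j(g,z)\equiv 1$, so the identity is trivial. For $\ell$ extended $\mathbb C$-linearly to $\mathbb V$, reality of $\ell$ yields $\overline{\ell(w)}=\ell(\bar w)$, so $\det(\ell(z)-\overline{\ell(w)})=\chi(\ell)\det(z-\bar w)$ via \eqref{chi}; meanwhile \eqref{Detchi} gives $j(\ell,z)=\Det_{\mathbb C}(\ell)=\chi(\ell)^{n/r}$, and the appropriate square-root determination --- the one furnished by the $\widetilde L$-lift of Section 1.2 --- reproduces the factor $\chi(\ell)$.

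The substantive step is the case of $j_0$. I would use the Jordan-algebraic formula $D(z\mapsto z^{-1})=-P(z^{-1})$, whence $Dj_0(z)=P(z^{-1})$; combined with \eqref{chiP} and \eqref{Detchi} this gives $j(j_0,z)=\chi(P(z^{-1}))^{n/r}=(\det z)^{-2n/r}$. For the left-hand side I would invoke the identity
\[\det(x^{-1}-y^{-1})=(-1)^r\,(\det x)^{-1}(\det y)^{-1}\det(x-y),\]
valid on $V^\times\times V^\times$ and extending holomorphically to $T_\Omega$. This is the Jordan analogue of the scalar identity $x^{-1}-y^{-1}=-(x-y)/(xy)$; it can be checked directly on the special Jordan algebra $\mathrm{Sym}(r,\mathbb R)$ and extended by analytic continuation of rational expressions in the Peirce coordinates, or deduced from the Hua-type relations combined with the character property $\chi\circ P=\det^{\,2}$. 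Applied with $x=z$, $y=\bar w$, it yields $\det(-z^{-1}+\bar w^{-1})=(\det z)^{-1}(\det\bar w)^{-1}\det(z-\bar w)$, exactly matching the right-hand side.

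The main obstacle is coherence of the square root along a product of generators. Each generator carries a canonical determination --- trivial on $N$, the $\widetilde L$-lift's $\chi(\ell)^{1/2}$ on $G(\Omega)$, and $(\det z)^{-n/r}$ on $j_0$ (using the global logarithm of $\det$ on $T_\Omega$ guaranteed by simple connectedness) --- and propagation along a word must agree with the single-valued holomorphic function $j(g,\cdot)^{1/2}$ built into the universal cover $\widetilde G$. This is precisely what the construction of $\widetilde G$ as pairs $(g,\psi_g)$ in Section 1.2 encodes: the cocycle identity $\psi_{g_1g_2}(z)=\psi_{g_1}(g_2z)+\psi_{g_2}(z)$ forces the square roots to multiply correctly, so agreement on the three families of generators propagates unambiguously and establishes \eqref{covdet} for all $g\in G(T_\Omega)$.
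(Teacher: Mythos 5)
Your overall strategy --- verify \eqref{covdet} on the generators $N$, $G(\Omega)$ and the inversion $j_0:z\mapsto -z^{-1}$, use Hua's formula $\det(x^{-1}-y^{-1})=(-1)^r(\det x)^{-1}(\det y)^{-1}\det(x-y)$ for the inversion, and propagate along words in the generators via the cocycle $j(g_1g_2,z)=j(g_1,g_2(z))\,j(g_2,z)$ --- is exactly the route the paper takes (its proof is a two-line reduction to the generators with a citation of \cite{fk}, Lemma X.4.4, for the case $g=j$). Your individual computations are also correct: $j(t_u,\cdot)\equiv 1$, $j(\ell,\cdot)\equiv\chi(\ell)^{n/r}$ by \eqref{Detchi}, and $j(j_0,z)=(\det z)^{-2n/r}$ via $Dj_0(z)=P(z^{-1})$, \eqref{chiP} and \eqref{Detchi}.

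There is, however, a genuine gap at the moment you declare the two sides equal. For $\ell\in G(\Omega)$ the left-hand side equals $\chi(\ell)\det(z-\bar w)$, whereas $j(\ell,z)^{1/2}\,\overline{j(\ell,w)}^{1/2}=\chi(\ell)^{n/r}$; for $j_0$ the left-hand side carries $(\det z)^{-1}(\det\bar w)^{-1}$, whereas $j(j_0,z)^{1/2}\,\overline{j(j_0,w)}^{1/2}=(\det z)^{-n/r}(\det\bar w)^{-n/r}$. These agree only when $n=r$, i.e.\ essentially only for $V=\mathbb R$. No choice of ``determination of the square root'' can repair this --- the discrepancy is in the modulus, not the phase, and in particular the $\widetilde L$-lift of Section~1.2 gives $e^{\psi_\ell/2}=\chi(\ell)^{n/(2r)}$, not $\chi(\ell)^{1/2}$. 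What your generator computations actually establish is
\begin{equation*}
\det\big(g(z)-\overline{g(w)}\big)=j(g,z)^{\frac{r}{2n}}\,\det(z-\bar w)\,\overline{j(g,w)}^{\frac{r}{2n}},\qquad j(g,z)=\Det{}_{\mathbb C}\big(Dg(z)\big),
\end{equation*}
equivalently the stated formula with $j$ replaced by the reduced automorphy factor $\Det_{\mathbb C}(Dg(z))^{r/n}$. This is in fact the version the paper needs later (in the proofs of \eqref{imrepk} and of Proposition~\ref{precovB*}, the factor produced by $\ell=P(v^{1/2})$ must be $\chi(\ell)=(\det v)^2$, not $\chi(\ell)^{n/r}$), so the exponent $1/2$ in the statement should be treated as a normalization slip to be corrected, not as something your computations can be bent to match. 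Once the exponent is fixed, your branch-coherence argument via the cocycle identity and simple connectedness of $T_\Omega$ is sound and completes the proof.
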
 
This is a classical result, which can be checked on the generators of the group $G(T_\Omega)$. The equality is easy for $g$ either a translation of an element of $L$ and is a consequence of \emph{Hua's formula} when $g=j$ (see \cite{fk} Lemma X.4.4).

\subsection{Covariance of the Rankin-Cohen brackets}

As explained in the introduction to this section, the covariance property is easy to obtain for the subgroups $N$ and $L$. First recall the action of these groups in the space $\mathcal H_\nu(T_\Omega)$ ($\nu>1+(r-1)d$)

\[v\in V,\qquad \pi_\nu(t_v) F(z) = F(z-v)
\]
\[\ell\in L\qquad \pi_\nu (\ell) F(z) = \chi(\ell)^{-\frac{\mu}{2}}F(l^{-1}z)
\]

Let $\widetilde \pi_\nu$ the representation of $G$ on $L^2_\nu(\Omega)$ corresponding to $\pi_\nu$ transmuted through the Laplace transform. Then
\[\big(\widetilde \pi_\nu(t_v)\varphi\big) (\xi) = e^{-i(v,\xi)} \varphi(\xi), \qquad v\in V
\]
\[\big(\widetilde \pi_\nu(\ell) \varphi\big)(\xi)= \chi(\ell)^{-\frac{\nu}{2}+\frac{n}{r}}\, \varphi(\ell^* \xi), \qquad \ell\in L\ .
\]
Let as before $\lambda, \mu>1+(r-1)d$ and let $k\in \mathbb N$.
\begin{lemma} The operator $\Phi_{\lambda,\mu}^{(k)}$ satisfies

$i)$ for $v\in V$,
 \[\Phi_{\lambda, \mu}^{(k)} \circ \widetilde \pi_{\lambda+\mu+2k}(t_v) = \big(\widetilde \pi_\lambda(t_v)\otimes \widetilde\pi_\mu(t_v)\big) \circ \Phi_{\lambda, \mu}^{(k)}\]
 
 $ii)$ for $\ell\in L$
 \[\Phi_{\lambda, \mu}^{(k)} \circ\widetilde \pi_{\lambda+\mu+2k}(\ell) = (\widetilde\pi_\lambda(\ell)\otimes \widetilde\pi_\mu(\ell)) \circ \Phi_{\lambda, \mu}^{(k)}\ .
 \]
 \end{lemma}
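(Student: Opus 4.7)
The plan is to verify both intertwining identities by pointwise computation, substituting the explicit formula for $\Phi_{\lambda,\mu}^{(k)}$ given at the start of Section 2.3 together with the explicit formulas for the $L^2$-model actions $\widetilde\pi_\nu(t_v)$ and $\widetilde\pi_\nu(\ell)$ recalled just above the lemma. All ingredients needed are in hand; no genuine computation beyond matching exponents is required.

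For part $i)$, applying $\Phi_{\lambda,\mu}^{(k)}$ to $\widetilde\pi_{\lambda+\mu+2k}(t_v)h$ replaces the factor $h(\xi+\zeta)$ by $e^{-i(v,\xi+\zeta)}h(\xi+\zeta)$ while leaving every other factor unchanged. On the other hand, applying $\widetilde\pi_\lambda(t_v)\otimes\widetilde\pi_\mu(t_v)$ to $\Phi_{\lambda,\mu}^{(k)}h$ multiplies by $e^{-i(v,\xi)}e^{-i(v,\zeta)}$. The identity $e^{-i(v,\xi+\zeta)}=e^{-i(v,\xi)}e^{-i(v,\zeta)}$ finishes the argument.

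For part $ii)$, the computation on the left-hand side contributes the single scalar $\chi(\ell)^{-(\lambda+\mu+2k)/2+n/r}$ in front of the same expression one would get from $\Phi_{\lambda,\mu}^{(k)}$ applied to $h\circ \ell^*$. On the right-hand side, I substitute $(\xi,\zeta)\mapsto(\ell^*\xi,\ell^*\zeta)$ inside $\Phi_{\lambda,\mu}^{(k)}h$, and then track the four resulting $\chi(\ell^*)$-factors: the three determinants $\det\xi$, $\det\zeta$, $\det(\xi+\zeta)$ contribute the exponents $\lambda-n/r$, $\mu-n/r$, $-\lambda-\mu-2k+n/r$ respectively, and the polynomial $c_{\lambda-n/r,\mu-n/r}^{(k)}$ contributes an extra $\chi(\ell^*)^k$ by the key covariance identity \eqref{cst}. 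The exponents add to $-n/r-k$, so the substitution produces the factor $\chi(\ell^*)^{-n/r-k}$ times $\Phi_{\lambda,\mu}^{(k)}(h\circ\ell^*)(\xi,\zeta)$. The outer product $\chi(\ell)^{-\lambda/2+n/r}\chi(\ell)^{-\mu/2+n/r}$ from the tensor product action then multiplies this, and after using $\chi(\ell^*)=\chi(\ell)$ a routine check shows the total exponent equals $-(\lambda+\mu+2k)/2+n/r$, matching the left-hand side.

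The only subtlety is the equality $\chi(\ell^*)=\chi(\ell)$ for $\ell\in L$; this is the point one must not skip over. It follows from the identity \eqref{Detchi}, namely $\Det(\ell)=\chi(\ell)^{n/r}$, together with $\Det(\ell^*)=\Det(\ell)$ (adjoint with respect to the trace form), so that $\chi(\ell^*)^{n/r}=\chi(\ell)^{n/r}$ and positivity of both on $L$ (established at the beginning of Section 1.2) forces $\chi(\ell^*)=\chi(\ell)$. With this in hand, the exponent bookkeeping in $ii)$ works out exactly, and no analytic difficulties arise because the identities are pointwise equalities of smooth functions. The main obstacle, therefore, is purely organizational: keeping straight the powers of $\chi(\ell)$ coming from four different sources (two $L^2$-actions, three determinant factors, and the polynomial $c_{\lambda-n/r,\mu-n/r}^{(k)}$).
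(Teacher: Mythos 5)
Your proof is correct and follows essentially the same route as the paper: part $i)$ is the immediate factorization $e^{-i(v,\xi+\zeta)}=e^{-i(v,\xi)}e^{-i(v,\zeta)}$ (which the paper omits as trivial), and part $ii)$ is the same bookkeeping of $\chi(\ell)$-exponents from the three determinant factors and the covariance identity \eqref{cst}, with the exponents summing to $-\tfrac{n}{r}-k$ exactly as in the paper's computation. Your explicit justification of $\chi(\ell^*)=\chi(\ell)$ via \eqref{Detchi} and positivity is a welcome addition to a step the paper uses tacitly.
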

\begin{proof} For $ii)$
\[\big(\Phi_{\lambda, \mu}^{(k)} \circ \widetilde \pi_{\lambda+\mu+2k} (\ell) h\big) (\xi, \zeta) = \chi(\ell)^{-\frac{\lambda+\mu+2k}{2}+\frac{n}{r}}\,\,\Phi_{\lambda, \mu}^{(k)} (h\circ \ell^*) (\xi, \zeta)
\]
\[= i^{rk}\chi(\ell)^{-\frac{\lambda+\mu+2k}{2}+\frac{n}{r}} (\det \xi)^{\lambda-{\frac{n}{r}}}(\det \zeta)^{\mu-{\frac{n}{r}}}\det (\xi+\zeta)^{\lambda-{\frac{n}{r}}}c_{\lambda-\frac{n}{r}, \mu-\frac{n}{r}}(\xi, \zeta) h(\ell^*\xi+\ell^* \zeta)
\]
On the other hand,
 \[\big(\widetilde \pi_\lambda(\ell)\otimes \widetilde \pi_\mu(\ell)\big) \big(\Phi_{\lambda,\mu}^{(k)} h\big) (\xi,\zeta) = \chi(\ell)^{-\frac{\lambda}{2}+\frac{n}{r}} \chi(\ell)^{-\frac{\mu}{2}+\frac{n}{r}}\big(\Phi_{\lambda+\mu}^{(k)} h\big)(\ell^*\xi,\ell^* \zeta) 
 \]
 \[=i^{rk}\chi(\ell)^{-\frac{\lambda}{2}-\frac{\mu}{2} +\frac{2n}{r}}(\det\ell^*\xi)^{\lambda-\frac{n}{r}}(\det\ell^*\zeta)^{\mu-\frac{n}{r}}(\det\ell^*\xi+\ell^*\zeta)^{-\lambda-\mu-2k+\frac{n}{r}}
 \]
 \[c_{\lambda-\frac{n}{r}, \mu-\frac{n}{r}}(\ell^*\xi,\ell^* \zeta) h(\ell^*\xi+\ell^* \zeta)
 \]
 \[= i^{rk}\chi(\ell)^{-\frac{\lambda}{2}-\frac{\mu}{2}-k+\frac{n}{r}}(\det \xi)^{\lambda-{\frac{n}{r}}}(\det \zeta)^{\mu-{\frac{n}{r}}}\det (\xi+\zeta)^{\lambda-{\frac{n}{r}}}c_{\lambda-\frac{n}{r}, \mu-\frac{n}{r}}(\xi, \zeta) h(\ell^*\xi+\ell^* \zeta)\ .
 \]
 \end{proof}
 It is now possible to transfer these results to the operator ${B_{\lambda, \mu}^{(k)}}^*$ through the Laplace transform.

\begin{proposition} The operator ${B_{\lambda, \mu}^{(k)}}^*$ satisfies
\begin{equation}\label{covtrans}
v\in V,\qquad \qquad{B_{\lambda, \mu}^{(k)}}^*\circ \pi_{\lambda+\mu+2k}(t_v) = \big(\pi_\lambda(t_v)\otimes \pi_\mu(t_v)\big) \circ {B_{\lambda, \mu}^{(k)}}^*
\end{equation}
\begin{equation}\label{covL}
\ell\in L,\qquad \qquad {B_{\lambda, \mu}^{(k)}}^*\circ \pi_{\lambda+\mu+2k}(\ell) =  \big(\pi_\lambda(\ell)\otimes \pi_\mu(\ell)\big) \circ {B_{\lambda, \mu}^{(k)}}^*
\end{equation}
\end{proposition}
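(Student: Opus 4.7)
The plan is to transfer the covariance properties for $\Phi_{\lambda,\mu}^{(k)}$ proved in the preceding lemma to the operator ${B_{\lambda,\mu}^{(k)}}^*$ via the Laplace transform. First, I would establish the precise relationship between the two operators. By definition $\widehat B_{\lambda,\mu}^{(k)}=\mathcal L^{-1}\circ B_{\lambda,\mu}^{(k)}\circ \mathcal L_2$, and since $\mathcal L$ and $\mathcal L_2$ are, up to a positive scalar, unitary onto the weighted Bergman spaces, taking formal adjoints gives (up to the same scalar)
\[
{B_{\lambda,\mu}^{(k)}}^{*}\;=\;\mathcal L_2\circ \Phi_{\lambda,\mu}^{(k)}\circ \mathcal L^{-1},
\]
because $\Phi_{\lambda,\mu}^{(k)}$ is by construction the formal adjoint of $\widehat B_{\lambda,\mu}^{(k)}$.

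The second step is to record that $\mathcal L$ intertwines $\widetilde\pi_\nu$ on $L^2_\nu(\Omega)$ with $\pi_\nu$ on $\mathcal H_\nu(T_\Omega)$, and similarly $\mathcal L_2$ intertwines $\widetilde\pi_\lambda\otimes\widetilde\pi_\mu$ with $\pi_\lambda\otimes\pi_\mu$. For the translation subgroup $N$ this is an immediate consequence of the definitions, since the factor $e^{-i(v,\xi)}$ in $\widetilde\pi_\nu(t_v)$ is exactly what is produced by translating the argument inside the Fourier-Laplace kernel. For the group $L$, the intertwining follows from the change of variables $\eta=\ell^*\xi$ together with identity \eqref{Detchi} giving $|\Det\ell|=\chi(\ell)^{n/r}$, which accounts precisely for the factor $\chi(\ell)^{-\nu/2+n/r}$ appearing in $\widetilde\pi_\nu(\ell)$.

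Once these two intertwining relations are in place, the verification of \eqref{covtrans} and \eqref{covL} is a direct composition. For instance, to check \eqref{covtrans} I would write
\[
{B_{\lambda,\mu}^{(k)}}^{*}\circ \pi_{\lambda+\mu+2k}(t_v)
=\mathcal L_2\circ \Phi_{\lambda,\mu}^{(k)}\circ\mathcal L^{-1}\circ \pi_{\lambda+\mu+2k}(t_v)
=\mathcal L_2\circ \Phi_{\lambda,\mu}^{(k)}\circ \widetilde\pi_{\lambda+\mu+2k}(t_v)\circ\mathcal L^{-1},
\]
then apply part $i)$ of the preceding lemma and use the intertwining of $\mathcal L_2$ with the tensor product representation to pull the action of $N$ outside. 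The identity \eqref{covL} is obtained in exactly the same manner from part $ii)$ of the lemma, using the $L$-intertwining property of $\mathcal L$ and $\mathcal L_2$.

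The only step requiring a small amount of care is the bookkeeping of the Laplace-transform normalization: one must check that the scalar constants from the isometry formula (Proposition on $\Vert\mathcal Lf\Vert^2_\nu$) and its analogue for $\mathcal L_2$ match up, so that the scalar factor in ${B_{\lambda,\mu}^{(k)}}^{*}=\text{const}\cdot\mathcal L_2\circ\Phi\circ\mathcal L^{-1}$ cancels when both sides of \eqref{covtrans}, \eqref{covL} are written out. Since the Laplace transforms commute with $\pi_\nu(t_v)$ and $\pi_\nu(\ell)$, this scalar is simply carried along and does not disturb the intertwining, so no real obstacle arises.
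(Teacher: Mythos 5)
Your proposal is correct and follows essentially the same route as the paper, which proves the preceding lemma for $\Phi_{\lambda,\mu}^{(k)}$ in the $L^2$-model and then simply transfers it to ${B_{\lambda,\mu}^{(k)}}^*$ through the Laplace transform; you merely spell out the conjugation ${B_{\lambda,\mu}^{(k)}}^*=\mathrm{const}\cdot\mathcal L_2\circ\Phi_{\lambda,\mu}^{(k)}\circ\mathcal L^{-1}$ and the intertwining of $\mathcal L$, $\mathcal L_2$ with the representations, which the paper leaves implicit (indeed $\widetilde\pi_\nu$ is defined as the transmuted representation). Your remark that the normalization scalar is harmless is also right, since a nonzero multiple of an intertwiner still intertwines.
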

\begin{proposition} Let $w\in T_\Omega$. Then for any $(z_1,z_2)\in T_\Omega\times T_\Omega$
\begin{equation}\label{imrepk}
\begin{split}
{B_{\lambda, \mu}^{(k)}}^* k_{\lambda+\mu+2k}^w (z_1,z_2)=\Gamma_\Omega(\lambda+k)\Gamma_\Omega(\mu+k) \\\det(z_1-z_2)^k \det(\frac{z_1-\overline w}{i})^{-\lambda-k} \det(\frac{z_2-\overline w}{i})^{-\mu-k}\ .
\end{split}
\end{equation}
\end{proposition}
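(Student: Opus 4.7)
The plan is to bootstrap from Proposition~\ref{imphi} (which is exactly the case $w=ie$, since $\varphi_{\lambda+\mu+2k}=k^{ie}_{\lambda+\mu+2k}$) to arbitrary $w\in T_\Omega$, using the partial covariance already established under $N$ and $L$. The key observation is that the subgroup $N\cdot L$ of $\widetilde G$ acts transitively on $T_\Omega$: writing $w=u+iy$ with $u\in V$ and $y\in\Omega$, the transitivity of $L$ on $\Omega$ yields $\ell\in L$ with $\ell(e)=y$, and then $w=(t_u\ell)(ie)$. So it suffices to transport the formula at $ie$ by the pair $(\widetilde{t_u},\widetilde{\ell})$.

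Set $\nu=\lambda+\mu+2k$ and $\widetilde g=\widetilde{t_u}\cdot\widetilde{\ell}$. From $\psi_{t_u}\equiv 0$ and $\psi_\ell(ie)=\tfrac{n}{r}\ln\chi(\ell)$, relation \eqref{kwkgw} at $w=ie$ gives
\[
k_\nu^w = \chi(\ell)^{-\nu/2}\,\pi_\nu(\widetilde g)\,\varphi_\nu.
\]
Applying ${B_{\lambda,\mu}^{(k)}}^*$ and pushing it across $\pi_\nu(\widetilde g)$ via \eqref{covtrans} and \eqref{covL} yields
\[
{B_{\lambda,\mu}^{(k)}}^* k_\nu^w = \chi(\ell)^{-\nu/2}\bigl(\pi_\lambda(\widetilde g)\otimes\pi_\mu(\widetilde g)\bigr)\,{B_{\lambda,\mu}^{(k)}}^*\varphi_\nu,
\]
and Proposition~\ref{imphi} provides the inner factor explicitly. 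It remains to compute the action of $\pi_\lambda(\widetilde\ell)\otimes\pi_\mu(\widetilde\ell)$ followed by $\pi_\lambda(\widetilde{t_u})\otimes\pi_\mu(\widetilde{t_u})$ on that expression.

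The main (routine) obstacle is bookkeeping of the $\chi(\ell)$-powers. Using $\ell^{-1}z+ie=\ell^{-1}(z+iy)$, the factor $\det\bigl(\tfrac{z_i+ie}{i}\bigr)^{-\lambda-k}$ transforms under $\widetilde\ell$ into $\chi(\ell)^{\lambda+k}\det\bigl(\tfrac{z_i+iy}{i}\bigr)^{-\lambda-k}$ (and analogously for $\mu+k$), while $\det(z_1-z_2)^k$ acquires $\chi(\ell)^{-k}$; the two $L$-cocycles contribute $\chi(\ell)^{-(\lambda+\mu)/2}$. Summing all exponents gives
\[
-\tfrac{\nu}{2}-\tfrac{\lambda+\mu}{2}+(\lambda+k)+(\mu+k)-k\;=\;0,
\]
so every $\chi(\ell)$ cancels. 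Finally, the translation by $u$ is holomorphic and leaves $\det(z_1-z_2)^k$ fixed while converting $\det\bigl(\tfrac{z_i+iy}{i}\bigr)$ into $\det\bigl(\tfrac{z_i-(u-iy)}{i}\bigr)=\det\bigl(\tfrac{z_i-\overline w}{i}\bigr)$, producing the claimed formula \eqref{imrepk}. The only delicate point is the consistent choice of branches of $\det(\cdot/i)^\nu$, which is handled by the convention fixed at the start of \S 3.1.
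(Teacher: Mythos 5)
Your argument is correct and follows essentially the same route as the paper: reduce to the base point $w=ie$ (Proposition~\ref{imphi}) and transport by the transitive action of $N\cdot L$ on $T_\Omega$, using \eqref{kwkgw} together with the partial covariance relations \eqref{covtrans} and \eqref{covL}. The only cosmetic difference is that the paper takes the specific element $\ell=P(y^{1/2})$ and treats the $L$-step and the translation step in two separate stages, whereas you use an arbitrary $\ell\in L$ with $\ell e=y$ and the single element $\widetilde g=\widetilde{t_u}\,\widetilde{\ell}$; your bookkeeping of the $\chi(\ell)$-exponents checks out.
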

\begin{proof} Let $v\in \Omega$. Then $P(v^{\frac{1}{2}} )\in L$, $P(v^{\frac{1}{2}})e=v$ and $\psi_{P(v^{1/2})}= \ln \big(\det v)^{\frac{n}{2r}}\big)$. Hence, by \eqref{kwkgw}
\[k^{iv}_{\lambda+\mu+2k} = (\det v)^{-(\lambda+\mu+2k)} \pi_{\lambda+\mu+2k}\big(P(v^{\frac{1}{2}})\big) k_{\lambda+\mu+2k}^{ie}\]
Use \eqref{covL} to obtain
\[{B_{\lambda, \mu}^{(k)}}^* \big(k_{\lambda+\mu+2k}^{iv}\big)= (\det v)^{-(\lambda+\mu+2k)}\big(\pi_{\lambda}(P(v^{\frac{1}{2}}))\otimes \pi_{\mu}(P(v^{\frac{1}{2}}))\big){B_{\lambda, \mu}^{(k)}}^* k_{\lambda+\mu+2k}^{ie}
\]
which after elementary computations using \eqref{covdet} and \eqref{chiP} yields
\[{B_{\lambda, \mu}^{(k)}}^* \big(k_{\lambda+\mu+2k}^{iv}\big)(z_1,z_2) = \Gamma_\Omega(\lambda+k)\Gamma_\Omega(\mu+k)\dots \]
\[\dots \det (z_1-z_2)^k \det \big(\frac{z_1+iv}{i}\big)^{-\lambda-k}\det\big(\frac{z_2+iv}{i}\big)^{-\mu-k}\ .
\]
Now let $u\in V$. Again by \eqref{kwkgw},  $k_\nu^{u+iv}= \pi_\nu(t_u)k_\nu^{iv}$
and apply again the same trick to compute ${B_{\lambda, \mu}^{(k)}}^* k_\nu^{u+iv}$ and finally obtain \eqref{imrepk}.
\end{proof}
Now comes the test of the covariance property on the coherent states $(k^w_{\lambda+\mu+2k})_{w\in T_\Omega}$.
\begin{proposition}\label{precovB*}
 Let $w\in T_\Omega$. For any $\widetilde g\in \widetilde G$,
\begin{equation}
\big({B_{\lambda, \mu}^{(k)}}^*\circ \pi_{\lambda+\mu+2k}(\widetilde g)\big) k_\nu^w = \big((\pi_\lambda(\widetilde g) \otimes  \pi_\mu(\widetilde g))\circ {B_{\lambda, \mu}^{(k)}}^* \big)k_\nu^w\ .
\end{equation}
\end{proposition}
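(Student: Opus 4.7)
The plan is to verify the identity pointwise on $T_\Omega\times T_\Omega$ by computing both sides in closed form and comparing. Set $\nu=\lambda+\mu+2k$. For the LHS, I first apply \eqref{kwkgw} to reduce $\pi_\nu(\widetilde g)k_\nu^w$ to the scalar multiple $e^{\frac{r\nu}{2n}\overline{\psi_g(w)}}k_\nu^{g(w)}$, and then invoke \eqref{imrepk} with $g(w)$ in place of $w$. This expresses the LHS as $e^{\frac{r\nu}{2n}\overline{\psi_g(w)}}\Gamma_\Omega(\lambda+k)\Gamma_\Omega(\mu+k)$ times the product of three determinant factors $\det(z_1-z_2)^k$, $\det\bigl((z_1-\overline{g(w)})/i\bigr)^{-\lambda-k}$, and $\det\bigl((z_2-\overline{g(w)})/i\bigr)^{-\mu-k}$.

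For the RHS, I apply \eqref{imrepk} directly to $k_\nu^w$, producing the same type of product in the variables $(z_1,z_2,\overline w)$, and then apply $\pi_\lambda(\widetilde g)\otimes\pi_\mu(\widetilde g)$. By \eqref{defpi} this substitutes $z_i\mapsto g^{-1}(z_i)$ in all three factors and inserts the Jacobian prefactor $e^{\frac{r\lambda}{2n}\psi_{g^{-1}}(z_1)}e^{\frac{r\mu}{2n}\psi_{g^{-1}}(z_2)}$.

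The core of the argument is to match the two expressions by transporting the $g^{-1}$-arguments out of the determinants. For each mixed factor I apply \eqref{covdet} to $g^{-1}$ with $w$ replaced by $g(w)$:
\[\det\bigl(g^{-1}(z_i)-\overline w\bigr)=j(g^{-1},z_i)^{1/2}\det\bigl(z_i-\overline{g(w)}\bigr)\,\overline{j(g^{-1},g(w))}^{1/2},\]
together with the chain-rule identity $j(g^{-1},g(w))=j(g,w)^{-1}$. For the purely holomorphic factor $\det(g^{-1}(z_1)-g^{-1}(z_2))^k$ I need the holomorphic counterpart
\[\det\bigl(g^{-1}(z_1)-g^{-1}(z_2)\bigr)=j(g^{-1},z_1)^{1/2}\det(z_1-z_2)\,j(g^{-1},z_2)^{1/2},\]
which is established by exactly the same generator-by-generator verification as \eqref{covdet}: it is trivial for translations and for $\ell\in L$, and follows from the holomorphic version of Hua's formula when $g=j$. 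After substituting both identities, the powers of $j(g^{-1},z_i)^{1/2}$ accumulated from the three determinant factors on the RHS cancel the Jacobians introduced by the tensor action, and the factors $\overline{j(g^{-1},g(w))}^{1/2}=\overline{j(g,w)}^{-1/2}$ accumulate to reproduce the scalar $e^{\frac{r\nu}{2n}\overline{\psi_g(w)}}$ on the LHS.

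The main obstacle I anticipate is the careful bookkeeping of exponents and branch choices: I must check that the exponents $-\lambda-k$, $-\mu-k$, $k$ attached to the three determinant factors combine with the $\tfrac{1}{2}$-powers in \eqref{covdet} to precisely reconstruct the representation weights $r\lambda/(2n)$, $r\mu/(2n)$ and $r\nu/(2n)$, and that the branches of the various square roots of $j(g,\cdot)$ and of the logarithms $\log\det(\cdot/i)$ are consistent with the conventions fixed in Section 3.1. A secondary but routine task is justifying the holomorphic analog of \eqref{covdet}. Once the bookkeeping is settled, the identity follows by purely algebraic comparison of the two closed-form expressions.
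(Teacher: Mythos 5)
Your proposal follows the paper's proof essentially verbatim: compute the left side via \eqref{kwkgw} and \eqref{imrepk}, compute the right side by applying \eqref{imrepk} and then the tensor action, and reconcile the two using \eqref{covdet}. You are in fact slightly more explicit than the paper (which leaves the final transformation ``to the reader''), notably in isolating the holomorphic analogue of \eqref{covdet} needed for the factor $\det\bigl(g^{-1}(z_1)-g^{-1}(z_2)\bigr)^k$ and the chain-rule identity $j(g^{-1},g(w))=j(g,w)^{-1}$; these are exactly the details required to finish the computation.
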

\begin{proof} For simplicity, set $\nu= \lambda+\mu+2k$
First,
\[\big({B_{\lambda, \mu}^{(k)}}^*\circ \pi_{\nu}(\widetilde g)\big) k_\nu^w(z_1,z_2) = {B_{\lambda, \mu}^{(k)}}^*\big(e^{\frac{r\nu}{2n}}k_\nu^{g(w)}\big)(z_1,z_2)
\]
and this expression is explictly known by \eqref{imrepk}.

On the other hand,
\[\big(\pi_\lambda(g)\otimes \pi_\mu(g)\big) \big({B_{\lambda, \mu}^{(k)}}^*k_\nu^w\big)(z_1,z_2)
\]
\[= \Gamma_\Omega(\lambda+k) \Gamma_\Omega(\mu+k) e^{\frac{r\lambda}{2n}\psi_{g^{-1}}(z_1)} e^{\frac{r\mu}{2n}\psi_{g^{-1}}(z_2)} 
\det\big(g^{-1}(z_1)-g^{-1}(z_2)\big)^k\dots\]\[\dots\det(g^{-1}(z_1)-\overline w)^{-\lambda-k} \det(g^{-1}(z_2)-\overline w)^{-\mu-k}.
\]
It remains to transform this last expression using \eqref{covdet} to conclude. Details are left to the reader.
\end{proof}
\begin{proposition}\label{covB*}
 For any $ g\in \widetilde G$,
\begin{equation}
{B_{\lambda, \mu}^{(k)}}^*\circ \pi_{\lambda+\mu+2k}(g)= \big(\pi_\lambda(g)\otimes \pi_\mu(g)\big) \circ {B_{\lambda, \mu}^{(k)}}^*
\end{equation}
\end{proposition}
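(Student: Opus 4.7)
The plan is to promote Proposition \ref{precovB*}, which establishes the covariance identity when applied to an arbitrary coherent state $k^w_{\lambda+\mu+2k}$, to an identity of operators on all of $\mathcal H_{\lambda+\mu+2k}$ by a density-and-continuity argument. The serious analytic content has already been packaged into Proposition \ref{precovB*} together with the boundedness statement of subsection 2.3; what remains is a soft closing step.

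First I would observe that both composites
\[A_1 = {B^{(k)}_{\lambda, \mu}}^*\circ \pi_{\lambda+\mu+2k}(g),\qquad A_2 = \big(\pi_\lambda(g)\otimes \pi_\mu(g)\big) \circ {B^{(k)}_{\lambda, \mu}}^*\]
are bounded linear maps $\mathcal H_{\lambda+\mu+2k}\longrightarrow \mathcal H_{\lambda,\mu}$. Indeed, ${B^{(k)}_{\lambda, \mu}}^*$ is continuous (being, up to a scalar, a partial isometry by the theorem of subsection 2.3), while $\pi_{\lambda+\mu+2k}(g)$ and $\pi_\lambda(g)\otimes \pi_\mu(g)$ are unitary under the standing hypothesis $\lambda,\mu>1+(r-1)d$, since the representations involved belong to the holomorphic discrete series.

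Next I would recall that the coherent states $\{k^w_{\lambda+\mu+2k}\}_{w\in T_\Omega}$ have dense linear span in $\mathcal H_{\lambda+\mu+2k}$: any $F$ orthogonal to all of them satisfies $F(w)=\langle F, k^w_{\lambda+\mu+2k}\rangle = 0$ for every $w\in T_\Omega$ by the reproducing property, hence $F=0$. Proposition \ref{precovB*} gives $A_1(k^w_{\lambda+\mu+2k})=A_2(k^w_{\lambda+\mu+2k})$ for every $w$, so by linearity the two bounded operators $A_1$ and $A_2$ agree on this dense subspace; continuity then forces $A_1=A_2$ on all of $\mathcal H_{\lambda+\mu+2k}$, which is the desired statement. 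There is no genuine obstacle at this stage; the only point deserving a moment of care is to check that the branch conventions for the powers $\det(\cdot)^{-\nu}$ appearing on both sides — implicit in $\pi_\nu$ through $\psi_g$ — are the same ones for which the transformation rule \eqref{covdet} and the identity \eqref{kwkgw} were formulated, but this was fixed once and for all in the set-up of subsection 3.1.
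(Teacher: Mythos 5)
Your argument is exactly the paper's: Proposition \ref{precovB*} gives agreement on the coherent states, which are total in $\mathcal H_{\lambda+\mu+2k}$ by the reproducing property, and continuity of ${B_{\lambda,\mu}^{(k)}}^*$ (together with unitarity of the representations) upgrades this to an identity of operators. The proposal is correct and matches the paper's proof, merely spelling out the totality and boundedness points the paper leaves implicit.
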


\begin{proof} The set of coherent states $\big\{ k_{\lambda+\mu+2k}^w, w\in T_\Omega\big\} $ is total in $\mathcal H_{\lambda+\mu+2k}$ and hence, by the continuity of ${B_{\lambda, \mu}^{(k)}}^*$, Proposition \ref{precovB*} implies Proposition \ref{covB*}.
\end{proof}

\begin{theorem} Let $\lambda, \mu\in \mathbb C$. The bi-differential operator $B_{\lambda,\mu}^{(k)}$ satisfies the following covariance property, valid for any $\ g\in \widetilde G$
\begin{equation}\label{covBeq}
B_{\lambda,\mu}^{(k)} \circ \big(\pi_\lambda(g)\otimes \pi_\mu(g) \big) = \pi_{\lambda+\mu+2k}(g) \circ B_{\lambda,\mu}^{(k)}\ .
\end{equation}
\end{theorem}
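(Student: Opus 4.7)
My plan follows the three-step strategy outlined in the introduction to section 3.

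\medskip

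\textbf{Step 1 (dualize in the discrete series range).} Assume first $\lambda,\mu>1+(r-1)d$, so that $\pi_\lambda,\pi_\mu$ and $\pi_{\lambda+\mu+2k}$ are unitary on the Hilbert spaces $\mathcal H_\lambda,\mathcal H_\mu,\mathcal H_{\lambda+\mu+2k}$. By Proposition \ref{covB*} we already have
\[
{B_{\lambda,\mu}^{(k)}}^{*}\circ\pi_{\lambda+\mu+2k}(\widetilde g)=\bigl(\pi_\lambda(\widetilde g)\otimes\pi_\mu(\widetilde g)\bigr)\circ {B_{\lambda,\mu}^{(k)}}^{*}.
\]
Taking Hilbert space adjoints of both sides and using $\pi_\nu(\widetilde g)^{*}=\pi_\nu(\widetilde g^{-1})$, the identity becomes $\pi_{\lambda+\mu+2k}(\widetilde g^{-1})\circ B_{\lambda,\mu}^{(k)}=B_{\lambda,\mu}^{(k)}\circ\bigl(\pi_\lambda(\widetilde g^{-1})\otimes\pi_\mu(\widetilde g^{-1})\bigr)$; replacing $\widetilde g$ by $\widetilde g^{-1}$ yields \eqref{covBeq} on $\mathcal H_{\lambda,\mu}$.

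\medskip

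\textbf{Step 2 (upgrade to the Montel topology).} The operator $B_{\lambda,\mu}^{(k)}$ is, by its very definition as a constant-coefficient bi-differential operator composed with the restriction to the diagonal, continuous from $\mathcal O(T_\Omega\times T_\Omega)$ to $\mathcal O(T_\Omega)$ for the Montel topology. The representation operators $\pi_\lambda(\widetilde g)\otimes\pi_\mu(\widetilde g)$ and $\pi_{\lambda+\mu+2k}(\widetilde g)$ are also continuous for this topology, since they are compositions with a biholomorphism and multiplication by a nonvanishing holomorphic function. Both sides of \eqref{covBeq} therefore define continuous operators on $\mathcal O(T_\Omega\times T_\Omega)$. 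The coherent states $k_{\lambda+\mu+2k}^{w}(z_{1},z_{2})=k_\lambda^{w}(z_1)\,k_\mu^{w}(z_2)$ (or rather the finite linear combinations of the tensor products $k_\lambda^{w_1}\otimes k_\mu^{w_2}$) lie in $\mathcal H_{\lambda,\mu}$ and are total in $\mathcal O(T_\Omega\times T_\Omega)$ for the compact-open topology; since the two sides agree on $\mathcal H_{\lambda,\mu}$ by Step 1, they agree on all of $\mathcal O(T_\Omega\times T_\Omega)$.

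\medskip

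\textbf{Step 3 (analytic continuation in $(\lambda,\mu)$).} Fix $\widetilde g\in\widetilde G$ and $F\in\mathcal O(T_\Omega\times T_\Omega)$. The polynomial $c_{\lambda-n/r,\mu-n/r}^{(k)}$ has coefficients depending polynomially on $(\lambda,\mu)$, so $B_{\lambda,\mu}^{(k)}F(z)$ depends polynomially on $(\lambda,\mu)\in\mathbb C^{2}$ for each fixed $z\in T_\Omega$. By the explicit formula \eqref{defpi}, $\pi_\nu(\widetilde g)H(z)=e^{\frac{r\nu}{2n}\psi_{g^{-1}}(z)}H(g^{-1}(z))$ depends holomorphically (in fact entirely) on $\nu\in\mathbb C$ for each fixed $H\in\mathcal O(T_\Omega)$ and $z$. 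Consequently both sides of \eqref{covBeq}, evaluated at $F$ and $z$, are entire functions of $(\lambda,\mu)$. They coincide on the non-empty open set $\{\lambda,\mu>1+(r-1)d\}$ by Step 2, hence everywhere on $\mathbb C^{2}$.

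\medskip

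\textbf{Main obstacle.} The only delicate point is Step 2: verifying that $\mathcal H_{\lambda,\mu}$ (or at least the span of the coherent-state products $k_\lambda^{w_1}\otimes k_\mu^{w_2}$) is dense in $\mathcal O(T_\Omega\times T_\Omega)$ for the Montel topology, and that the Montel-continuity of $\pi_\lambda(\widetilde g)\otimes\pi_\mu(\widetilde g)$ is compatible with the Hilbert-space continuity used in Step 1. Once this density/continuity interplay is in place, Steps 1 and 3 are essentially formal manipulations.
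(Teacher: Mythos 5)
Your Steps 1 and 3 are essentially the paper's argument and are sound: Step 1 is exactly the transposition of Proposition \ref{covB*} (valid because all three representations are unitary and $B_{\lambda,\mu}^{(k)}$ is bounded in this range), and Step 3 is the standard analytic-continuation argument (with the small caveat that $\{\lambda,\mu>1+(r-1)d\}$ is open only in $\mathbb R^2$, not $\mathbb C^2$, so you should invoke the identity theorem variable by variable on a real interval with an accumulation point --- this works, since both sides are polynomial in $\lambda$ resp.\ entire in $\nu$).

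The genuine gap is in Step 2, and you have correctly located it yourself. You need the linear span of $\mathcal H_{\lambda,\mu}$ (equivalently of the tensor-product coherent states $k_\lambda^{w_1}\otimes k_\mu^{w_2}$ --- note in passing that your displayed identity $k^w_{\lambda+\mu+2k}(z_1,z_2)=k_\lambda^w(z_1)k_\mu^w(z_2)$ is not correct, since $k^w_{\lambda+\mu+2k}$ is a function of one variable; only the parenthetical version is the right object) to be dense in $\mathcal O(T_\Omega\times T_\Omega)$ for the Montel topology. This is true but not obvious, and as written your proof simply asserts it. The paper takes a different route here that sidesteps density altogether: rewriting \eqref{covBeq} as the vanishing of the single operator $B_{\lambda,\mu}^{(k)}-\pi_{\lambda+\mu+2k}(g)\circ B_{\lambda,\mu}^{(k)}\circ\big(\pi_\lambda(g^{-1})\otimes\pi_\mu(g^{-1})\big)$, which is again a holomorphic bi-differential operator (now with holomorphic, non-constant coefficients) followed by restriction to the diagonal, it suffices to know that such an operator vanishing on $\mathcal H_{\lambda,\mu}$ vanishes identically. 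This is proved by transporting everything to the Harish--Chandra bounded realization via the Cayley transform: there all holomorphic polynomials lie in the weighted Bergman space, and a holomorphic differential operator annihilating all polynomials is zero. The same passage to the bounded realization would also repair your density claim (polynomials are dense in $\mathcal O(D)$ for a bounded balanced convex domain and belong to the Bergman space), so your approach is salvageable --- but the missing justification is precisely the nontrivial content of this step, and it must be supplied by some such argument.
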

\begin{proof} First assume that $\lambda,\mu>1+(r-1)d$. By transposing the result of Proposition \ref{covB*}, the covriance relation is satisfied  for the restriction of $B_{\lambda,\mu}^{(k)}$ to the space $\mathcal H_{\lambda,\mu}$. The general covariance relation, that is to say on $\mathcal O(T_\Omega\times T_\Omega)$ is a consequence of the fact that a holomorphic differential operator $D$ which vanishes on $\mathcal H_{\lambda,\mu}$ vanishes on $\mathcal O(T_\Omega\times T_\Omega)$(apply to the difference of the two sides of \eqref{covBeq}). This is better seen in the \emph{Harish Chandra realization} of $T_\Omega$ as a bounded symmetric domain. Without going into details (see \cite {fk} ch. X), the tube-type domain $T_\Omega$ is holomorphically equivalent to a bounded domain through the \emph{Cayley transform}. There are corresponding weighted Bergman spaces, and an equivalent realization of the holomorphic discrete series. Now in this model, all holomorphic polynomials belong to the weighted Bergman spaces, and hence if a holomorphic differential operator vanishes on the weighted Bergman space, it vanishes on all polynomials and hence is null. 

To finish the proof, let $\lambda, \nu$ be arbitrary in $\mathbb C$. As $B_{\lambda,\mu}^{(k)}$ and the representations  depend holomorphically on the parameters $\lambda, \mu$, the general result follows by a standard argument.  
\end{proof}

\section{The family $\big(C^{(k)}_{\lambda,\mu}\big)_{k\in \mathbb N}$}

\subsection{The orthogonality property}
\begin{proposition} The polynomials $C_{\lambda,\mu}^{(k)}$ form an orthogonal family in \break $L^2\left(\rrbracket -e,+e\,\llbracket, \det(e-v)^{\lambda}\det(e+v)^{\mu}\right)$.
\end{proposition}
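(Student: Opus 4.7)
The plan is to deduce the orthogonality from the observation that, for suitable $\lambda',\mu'$, the images of ${B_{\lambda',\mu'}^{(k)}}^*$ inside $\mathcal{H}_{\lambda',\mu'}$ are pairwise orthogonal subspaces. Transporting this fact to the $L^2$-model via the Laplace isometries and unwinding the change of variables $\boldsymbol{\iota}$, the claimed orthogonality relation among the $C^{(k)}_{\lambda,\mu}$ drops out.

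Concretely, first fix $\lambda' := \lambda + \tfrac{n}{r}$ and $\mu' := \mu + \tfrac{n}{r}$, assume $\lambda', \mu' > 1 + d(r-1)$, and take $k \neq k'$. By the covariance Theorem (and by taking adjoints), the composition $B_{\lambda',\mu'}^{(k)} \circ {B_{\lambda',\mu'}^{(k')}}^*$ is a continuous $\widetilde G$-intertwiner from $\mathcal{H}_{\lambda'+\mu'+2k'}$ into $\mathcal{H}_{\lambda'+\mu'+2k}$. These two holomorphic discrete series representations are irreducible and pairwise inequivalent, so Schur's lemma forces the composition to vanish. Equivalently, the images of ${B_{\lambda',\mu'}^{(k)}}^*$ and ${B_{\lambda',\mu'}^{(k')}}^*$ are orthogonal in $\mathcal{H}_{\lambda',\mu'}$, and the Laplace isometries turn this into
\begin{equation*}
\bigl\langle \Phi_{\lambda',\mu'}^{(k)} h_1,\, \Phi_{\lambda',\mu'}^{(k')} h_2 \bigr\rangle_{L^2_{\lambda',\mu'}} \;=\; 0
\qquad \text{for all } h_1, h_2 \in C^\infty_c(\Omega).
\end{equation*}

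Next, I would expand this inner product by repeating the computation in the proof of Proposition \ref{Phiisom}, now with mixed indices $(k,k')$. Applying $(\xi,\zeta) = \boldsymbol{\iota}(\eta, v)$ together with $\det\xi = 2^{-r}\det\eta\det(e-v)$, $\det\zeta = 2^{-r}\det\eta\det(e+v)$, $\det(\xi+\zeta) = \det\eta$ and $c^{(k)}_{\lambda,\mu}(\boldsymbol{\iota}(\eta,v)) = (\det\eta)^{k}\, C^{(k)}_{\lambda,\mu}(v)$, all powers of $\det\eta$ collect together and the double integral factors as an $\eta$-integral involving $h_1\overline{h_2}$ (against a power of $\det\eta$) times the $v$-integral
\begin{equation*}
I_{k,k'} \;:=\; \int_{\rrbracket -e,+e\llbracket} C^{(k)}_{\lambda,\mu}(v)\, C^{(k')}_{\lambda,\mu}(v)\, \det(e-v)^\lambda \det(e+v)^\mu\, dv,
\end{equation*}
with $\lambda = \lambda' - \tfrac{n}{r}$, $\mu = \mu' - \tfrac{n}{r}$ (the polynomials $C^{(k)}_{\lambda,\mu}$ are real on $\rrbracket -e,+e\llbracket$ when $\lambda, \mu$ are real, so no complex conjugate intervenes). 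Choosing $h_1 = h_2$ strictly positive makes the $\eta$-integral non-zero, so the vanishing of the whole inner product forces $I_{k,k'} = 0$ for $k \neq k'$, which is the claimed orthogonality.

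The ingredient doing the real work here is the irreducibility and mutual inequivalence of the holomorphic discrete series $\pi_{\lambda'+\mu'+2k}$ as $k$ varies, which powers the Schur step; this is classical for the scalar case but is where the whole argument is genuinely anchored. The hypothesis $\lambda+\tfrac{n}{r},\, \mu+\tfrac{n}{r} > 1 + d(r-1)$ is imposed so that the unitary representation-theoretic framework is available; since the integrand of $I_{k,k'}$ is a polynomial in $v$ times a factor holomorphic in $(\lambda,\mu)$, one expects to extend the orthogonality relation by analytic continuation to the common domain of absolute convergence of $I_{k,k'}$.
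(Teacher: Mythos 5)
Your proposal is correct and follows essentially the same route as the paper: orthogonality in $\mathcal H_{\lambda',\mu'}$ of the images of the adjoint Rankin--Cohen operators for distinct $k$, transported to the $L^2$-model via the Laplace isometries and factored through the change of variables $\boldsymbol\iota$, so that the strictly positive $\eta$-integral forces the $v$-integral to vanish. The only cosmetic differences are that you justify the orthogonality of the components by Schur's lemma where the paper cites Peng--Zhang, and you test against arbitrary positive $h_1=h_2$ where the paper uses the specific function $\psi_{\lambda+\mu+2k}$ (i.e.\ the coherent state $k^{ie}$).
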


\begin{proof}
 For $k\neq l\in \mathbb N$, the subspaces  $Im \left({B^{(k)}_{\lambda,\mu}}^*\right)$ and $Im \left({B^{(l)}_{\lambda,\mu}}^*\right)$ are two non equivalent (hence orthogonal) components in the decomposition of the tensor product $\mathcal H_{\lambda,\mu}$ (see e.g. \cite{pz}). In particular, ${B^{(k)}_{\lambda, \mu}}^{\!*} k_{\lambda+\mu+2k}^{ie}$ and ${B^{(l)}_{\lambda, \mu}}^{\!\!*}k_{\lambda+\mu+2l}^{ie}$ are orthogonal in $\mathcal H_{\lambda,\mu}$. Under inverse Laplace transform, this amounts to
 \[0= \int_\Omega \int_\Omega \Phi_{\lambda,\mu}^{(k)} (\psi_{\lambda+\mu+2k})(\xi,\zeta)\Phi_{\lambda,\mu}^{(l)} (\psi_{\lambda+\mu+2l})(\xi,\zeta)(\det \xi)^{-\lambda+\frac{n}{r} }(\det \zeta)^{-\mu+\frac{n}{r}} d\xi d\zeta
 \]
After some simplification the integral becomes
\[ \int_\Omega\int_\Omega (\det \xi)^{\lambda-\frac{n}{r}} (\det \zeta)^{\mu-\frac{n}{r}} e^{-2\tr (\xi+\zeta)}c_{\lambda-\frac{n}{r},\,\mu-\frac{n}{r}}^{(k)}(\xi,\zeta) c_{\lambda-\frac{n}{r},\,\mu-\frac{n}{r}}^{(l)}(\xi,\zeta) d\xi d\zeta\ .
\]
Use the change of coordinates $(\xi,\zeta) = \boldsymbol \iota(\eta,v)$ to write the integral as
\[\int_\Omega \int_{\rrbracket -e,+e\,\llbracket}\!\!\!\!\!\! \!\!\!\!\!\!e^{-2\tr \eta} (\det \eta)^{k+l+\frac{n}{r}}\det(e-v)^{\lambda-\frac{n}{r}}\det(e+v)^{\mu-\frac{n}{r}}\ \dots\]\[\dots \ C_{\lambda-\frac{n}{r},\,\mu-\frac{n}{r}}^{(k)}(v) C_{\lambda-\frac{n}{r},\,\mu-\frac{n}{r}}^{(l)}(v) d\eta\, dv\ .
\]
As 
\[\int_\Omega e^{-2\tr \eta} (\det \eta)^{k+l+\frac{n}{r}} d\eta >0
\]
follows
\[\int_{\rrbracket -e,+e\,\llbracket} C_{\lambda-\frac{n}{r},\mu-\frac{n}{r}}^{(k)}(v) \,C_{\lambda-\frac{n}{r},\mu-\frac{n}{r}}^{(l)}(v)\det(e-v)^{\lambda-\frac{n}{r}}\,\det(e+v)^{\mu-\frac{n}{r}} \, dv = 0\ .
\]
Up to the change of $\lambda-\frac{n}{r}, \mu-\frac{n}{r}$ to $\lambda,\mu$, this proves the statement.
\end{proof}

\subsection{Further investigations}

The family of polynomials $C_{\lambda,\mu}^{(k)}$ reminds of the classical Jacobi polynomials, appearing in the simplest  example $V=\mathbb R$ (cf \cite{kp} or \cite{c}). They have in common at least three properties (see \cite{c} for details) :
\smallskip

$i)$  they 	are defined through a \emph{Rodrigues formula}
\smallskip

$ii)$ they satisfy a recurrence relation relating $C_{\lambda,\mu}^{(k)}$ and $C_{\lambda+1,\mu+1}^{(k-1)}$
\smallskip

$iii)$ for appropriate values of the parameters $\lambda,\mu$, orthogonality properties with respect to a certain measure $d\nu_{\lambda, \mu}$  on an "interval". 
\smallskip

Notice however that, strictly speaking, the Rodrigues formula and the recurrence relation are expressed in terms of the polynomials $c_{\lambda,\mu}^{(k)}$, but in principle they could be converted in similar formulas for the $C_{\lambda,\mu}^{(k)}$.

For the appropriate values of the parameters mentioned in $iii)$, the Jacobi polynomials are not only orthogonal but form an \emph{orthogonal basis} of the Hilbert space of square-integrable functions for the measure $d\nu_{\lambda, \mu}= (1-x)^\lambda(1+x)^\mu dx$. This is lacking in our more general case. The polynomials $C_{\lambda, \mu}^{(k)}$ are  invariant by the automorphisms group $Aut(V)$ of the Jordan algebra $V$.  From the spectral theorem for Euclidean Jordan algebras, it is easy to see that such an invariant polynomial (or more generally invariant function) depends on $r$ variables. So one can guess that, in order to be a basis of the Hilbert pace of invariant functions square-integrable w.r.t. $d\nu_{\lambda, \mu}= \det(e-v)^\lambda \det(e+v)^\mu dv$, a family of polynomials should depend on a $r$-tuple of integers. The decomposition of the tensor product of two scalar holomorphic representations contains, beyond the scalar holomorphic representations, many non scalar components (see \cite{pz}), indexed by a $r$-tuple of integers, which are not considered in the present article. To these components correspond more polynomials, which presumably could enlarge the family of the orthogonal polynomials $C_{\lambda,\mu}^{(k)}, k\in \mathbb N$, in order to obtain an orthogonal \emph{basis} of the Hilbert space of square-integrable functions with respect to the measure $d\nu_{\lambda,\mu}$.

Let me finish by addressing another question. The classical Jacobi polynomials have expressions in terms of certain hypergeometric functions. A theory (not that much developed) of generalized hypergeometric functions exists for Euclidean Jordan algebra, see \cite{fk} ch. XV. A complementary investigation  would be to try to relate the polynomials $C_{\lambda, \mu}^{(k)}$ with these generalized hypergeometric functions.

\medskip
\footnotesize{\noindent Address\\ Jean-Louis Clerc, Universit\'e de Lorraine, CNRS, IECL, F-54000 Nancy, France
\medskip

\noindent \texttt{{jean-louis.clerc@univ-lorraine.fr
}}

\end{document}